\newtheorem{thm}{Theorem}[section]
\newtheorem{lem}{Lemma}[section]
\newtheorem{prop}{Proposition}[section]
\theoremstyle{definition}
\newtheorem{defn}{Definition}[section]
\theoremstyle{remark}
\newtheorem{rem}{Remark}[section]
\numberwithin{equation}{section}
\newcommand{\bu}{\mathbf{u}}
\newcommand{\bvarphi}{\bm{\varphi}}
\newcommand{\bnu}{\bm{\nu}}
\newcommand{\bGa}{\mathbf{\Gamma}}
\newcommand{\bx}{\mathbf{x}}
\newcommand{\by}{\mathbf{y}}
\newcommand{\rmi}{\mathrm{i}}
\newcommand{\bS}{\mathbf{S}}
\newcommand{\bK}{\mathbf{K}}
\newcommand{\bI}{\mathbf{I}}
\newcommand{\td}{\tilde}
\newcommand{\Acal}{\mathcal{A}}
\newcommand{\Lcal}{\mathcal{L}}
\newcommand{\Hcal}{\mathcal{H}}
\title[Quasi-Minnaert Resonances in 2D Elastic Wave Scattering]{Quasi-Minnaert Resonances in 2D Elastic Wave Scattering with Applications}
\author{Huaian Diao}
\address{School of Mathematics, Jilin University and Key Laboratory of Symbolic Computation and Knowledge Engineering of Ministry of Education, Changchun, Jilin, China.}
\email{diao@jlu.edu.cn, hadiao@gmail.com}
\author{Kaixin Lu}
\address{School of Mathematics, Jilin University, Changchun 130012, China.}
\email{lukaixin0215@163.com; lukx23@mails.jlu.edu.cn}
\author{Ruixiang Tang}
\address{School of Mathematics, Jilin University, Changchun 130012, China.}
\email{tangrx97@gmail.com; tangrx23@mails.jlu.edu.cn}
\author{Weisheng Zhou}
\address{School of Mathematics, Jilin University, Changchun 130012, China.}
\email{wszhou1211@163.com}
\begin{document}
	
	\maketitle
\begin{abstract}
		
In our earlier work \cite{DTL}, we introduced a novel quasi-Minnaert resonance for three-dimensional elastic wave scattering in the sub-wavelength regime. Therein, we provided a rigorous analysis of the boundary localization and surface resonance phenomena for both the total and scattered waves, achieved through carefully selected incident waves and tailored physical parameters of the elastic medium. In the present study, we focus on quasi-Minnaert resonances in the context of two-dimensional elastic wave scattering. Unlike the 3D case \cite{DTL}, the 2D setting introduces fundamental theoretical challenges stemming from (i) the intrinsic coupling between shear and compressional waves, and (ii) the complex spectral properties of the associated layer potential operators. By combining layer potential techniques with refined asymptotic analysis and strategically designed incident waves, we rigorously establish quasi-Minnaert resonances in both the internal and scattered fields. In addition, the associated stress concentration effects are quantitatively characterized.  Notably, the boundary-localized nature of the scattered field reveals potential applications in near-cloaking (via wave manipulation around boundaries). Our results contribute to a more comprehensive framework for studying resonance behaviors in high-contrast elastic systems.


	\medskip
		
\noindent{\bf Keywords:}~~Sub-wavelength; Quasi-Minnaert resonances; Neumann-Poincar\'e operator; Boundary localization; Surface resonance; Stress concentration

\medskip
\noindent{\bf 2020 Mathematics Subject Classification:}~~31A10, 35P20, 74J20

\end{abstract}

\section{Introduction}
Metamaterials are artificially engineered composite structures, whose functionalities primarily arise from meticulously designed micro-geometries, rather than from the intrinsic material properties of their constituent components. Mathematical analysis plays a fundamental role in elucidating the underlying physical mechanisms that govern the behavior of such systems.
Among the sub-wavelength physical mechanisms underlying metamaterials, Minnaert resonance emerges as a classical phenomenon with a wide range of important applications \cite{DAC, LBFWT, LSL}. This resonance has garnered increasing attention in recent years \cite{AFGLZ, HBD, HFL, HZ, HZ5}.

In acoustics, the low-frequency resonance induced by the high density contrast between a bubble and the surrounding liquid is known as the Minnaert frequency. A rigorous mathematical study of Minnaert resonance in acoustic media was first conducted in~\cite{AFGL}, where a quantitative relation between the Minnaert frequency and the high-contrast parameters was established. The resonance behavior of bubbles embedded in soft elastic media was further investigated in~\cite{HLIHJ}, while the dipolar resonance induced by a hard material in a soft elastic background (known as the HISE structure) was studied in~\cite{lizou}, where an explicit formula for the resonance frequency of the hard material was derived. Furthermore, it has been shown in~\cite{ACC, DGS} that bubbles can act as contrast agents to exploit Minnaert resonance for recovering material parameters such as mass density and bulk modulus. Advances on time-domain inverse problems utilizing Minnaert resonance have been discussed in~\cite{MMSi, MMSINI, SS23, SSW24}. More recently, the resonance behavior has been rigorously characterized in both the time-harmonic and time-domain regimes via the resolvent operator theory~\cite{LS241, LS24, MPS}.

The aforementioned studies primarily focus on the analysis of effective negative physical parameters induced by resonance mechanisms, the dependence of resonance frequencies on high-contrast material parameters, and the application of these phenomena to inverse problems. Physical experiments~\cite{LAI, LIUCHAN, LIUZHANG} have demonstrated that, in the sub-wavelength regime, high contrast between the material and the background medium can induce significant boundary localization and pronounced oscillations near the material interface. To characterize such behavior, the concept of \emph{quasi-Minnaert resonance} was introduced in~\cite{DTL}. This resonance is characterized by boundary localization and surface resonance, offering a theoretical framework for explaining the elastic wave phenomena observed in physical experiments. In contrast to classical Minnaert resonance, which occurs at discrete resonant frequencies~\cite{AFGL,lizou}, the quasi-Minnaert resonance manifests over a continuous frequency spectrum.

In this study, we explore elastic wave propagation in the sub-wavelength regime, where the elastic scatterer's size is significantly smaller than the wavelength of the incident wave. Through appropriate coordinate rescaling, we normalize the scatterer's size to a unit reference scale, yielding an asymptotically small angular frequency for the incident wave. This multiscale approach underpins our analytical framework for investigating elastic wave propagation in high-contrast materials. Within this framework, we rigorously establish quasi-Minnaert resonance in both the internal total field and the external scattered field in a two-dimensional setting. This resonance is characterized by boundary localization and surface resonance. However, the two-dimensional analysis is notably more intricate and technically demanding than its three-dimensional counterpart. In three-dimensional elasticity, certain shear waves can be decoupled from other shear and compressional waves \cite{DTL}, simplifying the analysis of quasi-Minnaert resonance. In contrast, two-dimensional elasticity involves intrinsic coupling of all shear and compressional waves, significantly complicating the theoretical analysis. Moreover, the spectral properties of the associated layer potential operators in two dimensions are more complex, necessitating refined analytical techniques compared to the three-dimensional case. Although both analyses rely on asymptotic expansions in the frequency parameter $\omega$, the two-dimensional case exhibits a markedly more intricate structure. The asymptotic forms of the internal total field and external scattered field feature more complex leading-order coefficients than their three-dimensional counterparts. To address these challenges, we introduce novel asymptotic techniques tailored to the intricate coupling structure of two-dimensional elasticity, enhancing the robustness of our analytical approach.

In Theorem \ref{thm:isl}, we demonstrate that high-contrast materials subjected to appropriately designed incident waves exhibit boundary localization in the sub-wavelength regime. In Theorem \ref{thm:sre}, incident waves are constructed that simultaneously induce boundary localization and surface resonance, leading to the occurrence of quasi-Minnaert resonance. It is also proven that surface resonance can occur independently of wave field boundary localization by suitably choosing an incident wave tailored to the high-contrast material parameters. Finally, based on the analysis of quasi-Minnaert resonance, stress concentration phenomena in the interior total field and the exterior scattered field near the boundary of the hard material are rigorously characterized, as established in Theorem \ref{thm:eu}. The occurrence of quasi-Minnaert resonance arises from the subtle coupling between high-contrast material parameters and the incident field. Moreover, quasi-Minnaert resonance and surface resonance can induce the design of high-contrast material parameters, as detailed in our analysis. Additionally, it is noteworthy that the quasi-Minnaert resonance corresponds to a continuous frequency spectrum, in contrast to the discrete Minnaert resonances described in~\cite{AFGL,lizou}.

The quasi-Minnaert resonance framework developed in this work advances the theoretical understanding of elastic wave behavior in the sub-wavelength regime and leads to potential applications. It rigorously extends the quasi-Minnaert theory to two-dimensional elasticity, enriching the existing theoretical framework. The observed surface resonance and stress concentration phenomena suggest potential mechanisms for nearly cloaking, where the scattered field is localized around the boundary of the material, rendering the material nearly undetectable.  While the primary focus lies in theoretical analysis, the identified physical phenomena offer a solid foundation for future exploration of application-oriented directions.


The paper is organized as follows. In Section \ref{sec:setup}, we introduce the mathematical framework and foundational concepts of layer potential theory. In Section \ref{sec:alg}, we rigorously analyze the boundary localization behavior of both the internal total field and the external scattered field on the inner and outer boundaries of the material. In Section \ref{sec:experiments}, we explore the surface resonance phenomena associated with the internal total field and external scattered field. Furthermore, we provide a rigorous demonstration of the stress concentration effect observed in both the internal total field and the external scattered field.

\section{Mathematical setup}
\label{sec:setup}

In this section, we present the mathematical formulation for our subsequent study. We investigate the incorporation of a hard material within a soft elastic homogeneous background. The material, denoted by \( D \), is modeled as a bounded Lipschitz domain in \( \mathbb{R}^2 \), and its complement \( \mathbb{R}^2 \setminus \overline{D} \) is assumed to be connected. Consider the domain \( D \) characterized by the material parameters \( (\tilde{\lambda}, \tilde{\mu}, \tilde{\rho}) \), where \( \tilde{\rho} \in \mathbb{R}_{+} \) denotes the mass density, \( \tilde{\lambda} \) and \( \tilde{\mu} \) are the real  Lam\'e parameters  satisfying the strong convexity conditions:
\begin{align}\label{eq:con}
	\mathrm{i)}.~~\td{\mu}>0;\qquad\qquad \mathrm{ii)}.~~\td{\lambda}+\td{\mu}>0.
\end{align}
The soft elastic background is mathematically represented as the domain $\mathbb{R}^2\backslash\overline{D}$, which is filled with a homogeneous elastic medium. In this background, the corresponding Lam\'e parameters are $(\lambda, \mu)$, which satisfy the condition given in \eqref{eq:con}. The constant $\rho \in \mathbb{R}_{+}$ denotes the mass density of the background medium in $\mathbb{R}^2\backslash\overline{D}$.
Let $\bu^i$ be a time-harmonic incident elastic wave, which satisfies the elastic equation in  $\mathbb{R}^2$
\begin{equation}\label{eq:inci}
	\mathcal{L}_{ {\lambda}, {\mu}}\bu^i(\bx) + \omega^2  {\rho} \bu^i(\bx) =0,
\end{equation}
where $\omega>0$ is the angular  frequency and the Lam\'e operator $ \mathcal{L}_{\lambda, \mu}$ associated with the parameters $(\lambda,\mu)$ is defined by 
\begin{equation}\label{op:lame}
	\Lcal_{\lambda,\mu}\bu^i(\bx):=\mu \triangle\bu^i(\bx) + (\lambda+ \mu)\nabla\nabla\cdot\bu^i(\bx).
\end{equation}
The interaction between the incident wave \( \bu^i \) and the elastic material \( D \) generates a scattered elastic field \( \bu^s \), thereby forming the total  field \(\bu = \bu^i + \bu^s.\)
Then the total displacement field $\bu$ described above is controlled by the following system
\begin{equation}\label{eq:xtm}
	\left\{
	\begin{array}{ll}
		\mathcal{L}_{\td{\lambda}, \td{\mu}}\bu(\bx) + \omega^2 \td{\rho} \bu(\bx) =0,  &  \bx\in D,  \medskip \\
		\mathcal{L}_{\lambda, \mu}\bu(\bx) + \omega^2\rho\bu(\bx) =0,   &   \bx\in \mathbb{R}^2\backslash \overline{D},  \medskip \\
		\bu(\bx)|_- = \bu(\bx)|_+,     & \bx\in\partial D,  \medskip \\
		\partial_{\td{\bnu}}\bu(\bx)|_- = \partial_{{\bnu}}\bu(\bx)|_+, & \bx\in\partial D,  \medskip \\
		\bu^s:=\bu-\bu^i  \qquad \mbox{satisfies the radiation condition},
	\end{array}
	\right.
\end{equation} 
where the subscript $\pm$ indicate the limits from outside and inside of $D$, respectively. The angular frequency $\omega \in \mathbb{R}_{+}$. The co-normal derivative $\partial_{\bnu}$ associated with the parameters $(\lambda, \mu)$ in the system \eqref{eq:xtm} is defined as 
\begin{equation}\label{eq:trac}
	\partial_{\bnu}\bu=\lambda(\nabla\cdot \bu)\bnu + 2\mu(\nabla^s\bu) \bnu.
\end{equation}
Here, $\bnu$ is the outward unit normal to $\partial D$ and the operator $\nabla^s$ is the symmetric gradient
\begin{displaymath}
	\nabla^s\mathbf{u}:=\frac{1}{2}\left(\nabla\mathbf{u}+\nabla\mathbf{u}^{\top} \right),
\end{displaymath}
where $\nabla\bu$ denotes the matrix $(\partial_j u_i)_{i,j=1}^2$ and the superscript $\top$ signifies the matrix transpose.
The operators $ \mathcal{L}_{\td{\lambda}, \td{\mu}}$ and $\partial_{\td{\bnu}}$ are defined in \eqref{op:lame} and \eqref{eq:trac}, respectively, with the parameters $(\lambda, \mu)$ replaced by $(\td{\lambda}, \td{\mu})$.
In \eqref{eq:xtm}, the radiation condition is characterized by the following equations \cite{ LHLJL}:
\begin{align}
	(\nabla\times\nabla\times \bu^s)(\bx)\times\frac{\bx}{|\bx|}-\mathrm{i} {k}_s\nabla\times \bu^s(\bx)=&\mathcal{O}\left(|\bx|^{-2}\right),\notag\\
	\frac{\bx}{|\bx|}\cdot[\nabla(\nabla\cdot \bu^s)](\bx)-\mathrm{i} {k}_p\nabla \bu^s(\bx)=&\mathcal{O}\left(|\bx|^{-2}\right),\notag
\end{align}
as $|\mathbf{x}|\rightarrow+\infty$, where $\rmi$ denotes the imaginary unit and
\begin{equation}\label{pa:ksp}
	{k}_s=\omega/{c}_s, \quad  \quad {k}_p=\omega/{c}_p,
\end{equation}
with
\({c}_s = \sqrt{{\mu}/{\rho}}\) and  \({c}_p=\sqrt{ ({\lambda} + 2 {\mu})/{\rho}}\)
representing the compressional and shear wave speeds in the medium. It is well-known that the elastic wave can be decomposed into the shear wave (s-wave) and the compressional wave (p-wave). In \eqref{pa:ksp}, the parameters $k_s$ and $k_p$ signify the wavenumbers of the s-wave and p-wave, respectively. 
In what follows, the parameters $\td{k}_s, \td{k}_p, \td{c}_s, \td{c}_p$ are defined in \eqref{pa:ksp} by replacing $(\lambda, \mu, \rho)$ with $(\td{\lambda}, \td{\mu}, \td{\rho})$.
In this paper, we mainly focus on the configuration of incorporating hard material $D$ within a soft elastic medium. Accordingly, the respective physical parameters in different domains exhibit a high contrast and follow the subsequent relationship
\begin{align}\label{eq:hicon}
	\delta  = \frac{\lambda}{\td{\lambda} }=\frac{\mu}{\td{\mu}}, \quad  \epsilon =\frac{\rho}{\td{\rho}} ,
\end{align}
where $\delta\ll1$ and $\epsilon\ll 1$. To better describe the contrast of wave speeds in different regions, we introduce another parameter $\tau$, namely 
\begin{align}\label{eq:detau}
	\tau = \frac{{c}_s}{\td{c}_s} = \frac{{c}_p}{\td{c}_p}= \sqrt{\delta/\epsilon}.
\end{align}
The last equality in \eqref{eq:detau} is derived from \eqref{eq:hicon} and the expressions of $c_s$ and $c_p$ in \eqref{pa:ksp}. Moreover, we assume that the contrast $\tau$ satisfies 
\begin{align}\label{eq:de}
	\tau= \sqrt{\delta/\epsilon} < 1.
\end{align}
This assumption in \eqref{eq:de} is reasonable due to the fact that the wave speed within the hard material is higher than that within the soft background.
In this paper, we consider sub-wavelength regime, i.e.,
\begin{align}\label{ass:sub}
	\omega \cdot {\rm diam} (D) \ll 1.
\end{align}
The assumption \eqref{ass:sub} indicates that the size of the material $D$ is smaller than the operating wavelength of the incident wave. Assume that the physical configuration $(\lambda, \mu, \rho)$ describing the homogeneous elastic medium satisfies
\begin{align}\label{eq:lambda=}
	\lambda=\mathcal{O}(1),\quad\mu=\mathcal{O}(1),\quad\rho=\mathcal{O}(1).
\end{align}    
By coordinate transformation, we may assume that the size of the domain $D$ is of order $1$. Consequently, the angular frequency $\omega=o(1)$, which indicates that the compressional and shear wave numbers fulfill ${k}_s =o(1)$ and ${k}_p = o(1)$. Additionally, based on the relationship \eqref{eq:de}, we have $\td{k}_s = o(1), \td{k}_p = o(1)$ and 
\begin{align}\label{eq:tdksp}
	\td{k}_s = \tau k_s, \quad \td{k}_p =  \tau k_p.
\end{align}

In this study, we mainly apply the potential theory to investigate the system \eqref{eq:xtm}. Next, we present the potential theory of the Lam\'e system.
In the two-dimensional case, the Kupradze matrix $\bGa^{\omega}=(\Gamma^{\omega}_{i,j})_{i,j=1}^2$ of the fundamental solution to the operator $\Lcal_{\lambda,\mu} + \omega^2\rho$  is given by \cite{ABG}
\begin{displaymath}
	(\Gamma^{\omega}_{i,j})_{i,j=1}^2(\bx)=-\frac{\mathrm{i}}{4\mu}\delta_{ij} H_0\left(k_s|\bx|\right)+\frac{\mathrm{i}}{4\omega^2\rho}\partial_i\partial_j\left(H_0\left(k_p|\bx|\right)-H_0\left(k_s|\bx|\right)\right),
\end{displaymath}
where $H_{0}\left(\cdot\right)$ is the Hankel function of the first kind of order 0, and $k_s$ and $k_p$ are defined in \eqref{pa:ksp}, and $\delta_{ij}$ is Kronecker delta function. For $\omega=0$, we denote  $\bGa^{0}$ by  $\bGa$ for simplicity, and $\bGa^{0}$ has the following expression
\[\Gamma^{0}_{i,j} (\bx) = \frac{\gamma_1}{2\pi}\delta_{ij}\ln|\bx|-\frac{\gamma_2}{2\pi}\frac{x_ix_j}{|\bx|^2}. \]
The single-layer potential associated with the fundamental solution $\bGa^{\omega}$ is denoted by
\begin{displaymath}\label{eq:single}
	\bS_{\partial D}^{\omega}[\bvarphi](\bx)=\int_{\partial D} \bGa^{\omega}(\bx-\by)\bvarphi(\by)ds(\by), \quad \bx\in\mathbb{R}^2,
\end{displaymath}
for $\bvarphi\in L^2(\partial D)^2$. On the boundary $\partial D$, the co-normal derivative of the single-layer potential satisfies
\begin{align}\label{eq:jump}
	\partial_{\bnu}   \bS_{\partial D}^{\omega}[\bvarphi]|_{\pm}(\bx)=\left( \pm\frac{1}{2}\bI +  \bK_{\partial D}^{\omega, *} \right)[\bvarphi](\bx), \quad \bx\in\partial D,
\end{align}
where
\[\bK_{\partial D}^{\omega, *} [\bvarphi](\bx)=\mbox{p.v.} \int_{\partial D} \partial_{\bnu_{\bx}} \bGa^{\omega}(\bx-\by)\bvarphi(\by)ds(\by),\]
with $\mbox{p.v.}$ represents the Cauchy principal value. Here the operator $\bK_{\partial D}^{\omega, *}$ in \eqref{eq:jump} is called the Neumann-Poincar\'e (N-P) operator. 

With the help of the potential theory presented above, the solution of \eqref{eq:xtm} can be written in the following ansatz
\begin{align}\label{eq:sol}
	\bu=
	\left\{
	\begin{array}{ll}
		\td{\bS}_{\partial D}^{\omega}[\bvarphi_{1}](\bx), & \bx\in D,  \smallskip \\
		{\bS}_{\partial D}^{\omega}[\bvarphi_{2}](\bx) +\bu^i, &  \bx\in \mathbb{R}^2\backslash \overline{D},
	\end{array}
	\right.
\end{align}
where the density functions $\bvarphi_{1}, \bvarphi_{2} \in L^2(\partial D)^2$ are determined by the transmission condition across $D$. Here the operator $\td{\bS}_{\partial D}^{\omega}$ is the single-layer potential operator associated with the parameters $(\td{\lambda}, \td{\mu}, \td{\rho})$.
By applying the jump formula in \eqref{eq:jump} and imposing the transmission conditions on the boundary, the density functions $\bvarphi_{1}$ and $\bvarphi_{2}$ in \eqref{eq:sol} can be controlled by the following integral equations
\begin{align}\label{eq:or}
	\Acal(\omega,\delta) [\Phi](\bx)=F(\bx), \quad \bx\in\partial D,
\end{align}
where
\[
\Acal(\omega,\delta)=  \left(
\begin{array}{cc}
	\td{\bS}_{\partial D}^{\omega} &  -{\bS}_{\partial D}^{\omega} \medskip \\
	-\frac{\bI}{2} +  \td{\bK}_{\partial D}^{\omega, *} & -\frac{\bI}{2} -  {\bK}_{\partial D}^{\omega, *}\\
\end{array}
\right),
\;\;
\Phi= \left(
\begin{array}{c}
	\bvarphi_{1} \\
	\bvarphi_{2} \\
\end{array}
\right),
\;\; \mbox{and} \;\; 
F= \left(
\begin{array}{c}
	\bu^i \\
	\partial_{\bnu}  \bu^i \\
\end{array}
\right).
\]
For the further discussion, we introduce the spaces $\Hcal=L^2(\partial D)^2\times L^2(\partial D)^2$ and $\Hcal^1=H^1(\partial D)^2\times L^2(\partial D)^2$. The operator $\Acal(\omega,\delta)$ is defined from $\Hcal$ to $\Hcal^1$.

In this paper, we investigate the boundary localization and surface resonance for the elastic scattering problem described by \eqref{eq:xtm} in the sub-wavelength regime. To this end, we make suitable adaptations from the corresponding definitions in \cite{DTL}, adjusted to the two-dimensional case. 
\begin{defn}\label{def:surface localized}
	Recall that $D$ is a bounded Lipschitz domain with a connected complement. For any sufficiently small $\xi_{1}, \xi_{2} \in \mathbb{R}_{+}$, we define the neighborhoods in the interior and exterior of the boundary of $D$ as
	\begin{align}\label{eq:Mdef}
		\mathcal{N}_{D,\xi_{1}}^{-} &:= \{ \mathbf{x} \in D \,\big|\, \mathrm{dist}(\mathbf{x}, \partial D) < \xi_{1} \}, \\
		\mathcal{N}_{D,\xi_{2}}^{+} &:= \{ \mathbf{x} \in  D_R \setminus \overline{D} \,\big|\, \mathrm{dist}(\mathbf{x}, \partial D) < \xi_{2} \},\notag
	\end{align}
	where $\mathrm{dist}(\mathbf{x}, \partial D) := \inf\limits_{\mathbf{y} \in  \partial D} \|\mathbf{x} - \mathbf{y}\|$ denotes the Euclidean distance, and $D_R$ denotes a disk of radius $R$ centered at the origin in $\mathbb{R}^2$, such that $D \subseteq D_R$. Corresponding to the scattering problem \eqref{eq:xtm}, the internal total field $\mathbf{u}|_{D}$ is termed {\it interior boundary localized}, and the external scattered field $\mathbf{u}^s|_{\mathbb{R}^2 \setminus \overline{D}}$ is referred to as {\it exterior boundary localized}, provided there exist sufficiently small parameters $\xi_{1},\xi_{2}  \in \mathbb{R}_{+}$ corresponding to each case such that
	\begin{align}\label{eq:def2.1}
		\frac{\|\mathbf{u}\|_{L^{2}(D \setminus \mathcal{N}_{D,\xi_{1}}^{-})^{2}}^{2}}{\|\mathbf{u}\|_{L^{2}(D)^2}^{2}} \ll 1,
		\quad
		\frac{\|\mathbf{u}^{s}\|_{L^{2}(\left(D_R \setminus \overline{D}\right) \setminus \mathcal{N}_{D,\xi_{2}}^{+})^2}^{2}}{\|\mathbf{u}^{s}\|_{L^{2}(D_R \setminus \overline{D})^2}^{2}} \ll 1.
	\end{align}
\end{defn}
\begin{rem}
	According to Definition \ref{def:surface localized}, let the sufficiently small parameter \(\varepsilon\) characterize the level of boundary localization. We quantitatively express \eqref{eq:def2.1} as
	\begin{align}\label{eq:rem2.1}
		\frac{\|\mathbf{u}\|_{L^{2}(D \setminus \mathcal{N}_{D,\xi_{1}}^{-})^2}^{2}}{\|\mathbf{u}\|_{L^{2}(D)^2}^{2}} \leq \varepsilon\ll1, \quad \frac{\|\mathbf{u}^{s}\|_{L^{2}\left((D_R \setminus \overline{D}) \setminus \mathcal{N}_{D,\xi_{2}}^{+}\right)^2}^{2}}{\|\mathbf{u}^{s}\|_{L^{2}(D_R \setminus \overline{D})^2}^{2}} \leq \varepsilon\ll1.
	\end{align}
	By combining \eqref{eq:Mdef}, we can rewrite the form of \eqref{eq:rem2.1} as follows:
	\begin{align}\label{eq:rewrem2.1}
		\frac{\|\mathbf{u}\|_{L^2(\mathcal{N}_{D,\xi_{1}}^{-})^2}^2}{\|\mathbf{u}\|_{L^2(D)^2}^2} = 1 - \mathcal{O}\left(\varepsilon\right), \quad \frac{\|\mathbf{u}^s\|_{L^2(\mathcal{N}_{D,\xi_{2}}^{+})^2}^2}{\|\mathbf{u}^s\|_{L^2(\mathbb{R}^2 \setminus D)^2}^2} = 1 - \mathcal{O}\left(\varepsilon\right).
	\end{align}
	As more intuitively indicated by \eqref{eq:rewrem2.1}, for a sufficiently small \(\varepsilon\), if the index \(n\) of the incident wave in \eqref{eq:ui} satisfies the conditions stated in Theorem \ref{thm:isl}, then the internal total field \(\bu|_D\) and the external scattered field \(\bu^s|_{D_R \setminus \overline{D}}\) exhibit boundary localization in \(\mathcal{N}_{D,\xi_{1}}^{-}\) and \(\mathcal{N}_{D,\xi_{2}}^{+}\), respectively.
	This observation aligns with the boundary localization introduced in \eqref{eq:def2.1} of  Definition \ref{def:surface localized}.
\end{rem}

\begin{defn}\label{def:surface resonant}
	Consider the internal total field $\mathbf{u}|_{D}$ and the external scattered field $\mathbf{u}^s|_{\mathbb{R}^2 \setminus \overline{{D}}}$ corresponding to the scattering problem \eqref{eq:xtm} associated with  incident wave $\mathbf{u}^i$, where $D$ represents material with bounded Lipschitz boundary. The neighborhoods $\mathcal{N}_{D,\xi_{1}}^{-} $ and $\mathcal{N}_{D,\xi_{2}}^{+}$ are defined in \eqref{eq:Mdef}. If the following conditions hold: 
	\begin{align}\label{eq:def2.2}
		\frac{\|\nabla \mathbf{u} \|_{L^2(\mathcal{N}_{D,\xi_{1}}^{-})^2}}{\|\mathbf{u}^i\|_{L^2(D)^2} } \gg 1, 
		\qquad \mbox{and} \qquad
		\frac{\|\nabla \mathbf{u}^s \|_{L^2(\mathcal{N}_{D,\xi_{2}}^{+})^2} }{\|\mathbf{u}^i\|_{L^2(D)^2} } \gg 1,
	\end{align}
	then we say that $\mathbf{u}|_{D}$ and $\mathbf{u}^s|_{\mathbb{R}^2 \setminus \overline{{D}}}$ are surface resonances.
\end{defn}
\begin{rem}
	Comparing  Definitions \ref{def:surface localized} and \ref{def:surface resonant}, we observe that in \eqref{eq:def2.2}, the $L^2$ norm of the incident wave $\mathbf{u}^i$   is explicitly divided, whereas in \eqref{eq:def2.1}, it does not explicitly appear. Nevertheless, the $L^2$ norm of $\mathbf{u}^i$ is implicitly present in \eqref{eq:def2.1}, as the ratio in the inequalities remains unchanged when both numerator and denominator are divided by $\|\mathbf{u}^i\|_{L^2(D)^2}$. This operation effectively introduces normalization by $\|\mathbf{u}^i\|_{L^2(D)^2}$ for the wave field within the hard material $D$. The analysis focuses on the sub-wavelength regime characterized by $\omega \cdot {\rm diam} (D) \ll 1$, where the small volume of $D$ leads to a correspondingly small value of $\|\mathbf{u}^i\|_{L^2(D)^2}$. Under these conditions, normalization by $\|\mathbf{u}^i\|_{L^2(D)^2}$ becomes essential for proper characterization of the wave phenomena.
\end{rem}

Building on  Definitions \ref{def:surface localized} and \ref{def:surface resonant}, we introduce the definition of quasi-Minnaert resonance.
\begin{defn}\label{def:quasi minnaert}
	Consider the elastic scattering problem described by \eqref{eq:xtm} under the excitation of an incident wave \( \mathbf{u}^i \) at frequency \( \omega \). The frequency \( \omega \) is termed the \textit{quasi-Minnaert resonance frequency} associated with \( \mathbf{u}^i \) if the resulting internal total field \( \mathbf{u}|_D \) and external scattered field \( \mathbf{u}^s|_{\mathbb{R}^2 \setminus \overline{D}} \) exhibit boundary localization and surface resonance, respectively. In this case, the domain \( D \) is referred to as the \textit{quasi-Minnaert resonator} corresponding to the incident field \( \mathbf{u}^i \).
\end{defn}
\begin{rem} 
	The quasi-Minnaert resonance arises from the combined effect of the incident wave \( \mathbf{u}^i \) and the high contrast in material parameters. A key feature of this resonance is that the associated frequencies form a continuous set in the sub-wavelength regime. This behavior stands in contrast to the classical Minnaert resonance, as studied in \cite{AFGL,lizou}, which is solely determined by the material contrast, independent of the incident wave \( \mathbf{u}^i \), and produces a discrete set of resonance frequencies. Moreover, the quasi-Minnaert resonance offers a rigorous theoretical framework for explaining experimentally observed physical phenomena such as boundary localization and high surface oscillations of the generated wave field, as reported in \cite{LAI, LIUCHAN, LIUZHANG}. These insights underscore the significance of incorporating both incident wave and material microstructure in the analysis of elastic scattering resonances.
\end{rem}

\section{Boundary localization  of the interior total  and exterior scattered wave fields}
\label{sec:alg}
In this section, we first present the results related to the single-layer potential and the N-P operator $\bK_{\partial D}^{\omega, *}$ corresponding to a disk in $\mathbb{R}^2$. Next, we derive that in the sub-wavelength regime, by specifically choosing the incident wave as shown in \eqref{eq:ui}, the internal total field $\bu$ and the external scattered field $\bu^{s}$ of \eqref{eq:xtm} exhibit boundary localization both inside and outside the material $D$, as demonstrated in  Theorem \ref{thm:isl}. It is emphasize that by Theorem \ref{thm:isl}, the scattered wave  $\bu^{s}$  exhibits boundary localization in the vicinity of \(\partial D\) when one an appropriately choses the  incident wave. Therefore, the material \(D\) achieves near-cloaking under exterior measurements.

In the following discussion, we introduce the requisite notation and pertinent formulas. Let $\mathbb{N}$ be defined as the set of positive integers, and $\mathbb{N}_{0}=\mathbb{N}\cup\{0\}$. Let $J_{n}(t)$ and $H_n(t)$, $n\in\mathbb{N}$, denote the Bessel and Hankel functions of the first kind of order $n$, respectively. For any fixed $n\in\mathbb{N}_{0}$, if $0<|t|\ll1$, the following expansions hold (cf.\cite{CK}):
\begin{align}
	J_{n}(t) &=\frac{t^{n} }{2^{n}n! } \left [ 1-\frac{t^{2} }{4(n+1)}  +\frac{t^{4} }{2^{5}(n+2)(n+1) } + \mathcal{O}(t^4)\right ], \quad n\ge 0,\label{eq:jn}\\
	H_{n}(t)&=-\frac{\mathrm{i}2^{n} (n-1)!}{\pi t^{n} } \left [ 1+\frac{t^{2} }{4(n-1)} +\frac{t^{4} }{2^{5}(n-1)(n-2)}+\mathcal{O}(t^4)  \right ], \quad n\ge 4,\label{eq:hn}
\end{align}
and possesses the following recurrence relations 
\begin{align}
	J_{n}^{\prime}(t)=J_{n-1}(t)-(n/t)J_{n}(t),\quad
	J_{n}^{\prime}(t)=-J_{n+1}(t)+(n/t)J_{n}(t),\label{eq:dtgx}
\end{align}
and the function $H_n(t) $ also satisfies the same derivative identities.

To facilitate the presentation of our ideas throughout the rest of the paper, we focus on the case where the hard material \(D\) embedded in the soft elastic medium is a unit disk.  Correspondingly, let $D_R \subset \mathbb{R}^2$ denote the disk centered at the origin with radius $R \in \mathbb{R}_+$.  Let $\bx=\left(x_{j}\right)_{j=1}^{2} \in \mathbb{R}^{2}$  be the Euclidean coordinates and $r=|\bx|$. Let $\theta_{\bx}$ be the angle between $\bx$ and the $x_{1}-$axis; without ambiguity we use $\theta$ instead of $\theta_{\bx}$. We consider the disk $D_{R}$, where $\bm{\upsilon}=(\cos(\theta),\sin(\theta))^t$ represents the  outward unit normal to a boundary $\partial D_{R}$ and $\bm{t}=(-\sin(\theta),\cos(\theta))^{t}$ denotes the tangential direction along $\partial D_{R}$. Next, we derive the expressions for the single-layer potentials $\mathbf{S}_{\partial D_R}^\omega $ associated with the two densities $e^{\mathrm{i}n\theta}\bm{\upsilon}$ and $e^{\mathrm{i}n\theta}\bm{t}$.

Lemmas \ref{lem:sinlay} and \ref{lem:npz} state some spectral properties of the single-layer potential and the N-P operator associated with two densities \(e^{\mathrm{i}n\theta}\bm{\upsilon}\) and \(e^{\mathrm{i}n\theta}\bm{t}\).
A detailed proof for the case \(\rho = 1\) is provided in \cite[Theorem 1, Proposition 1, Lemma 8-10]{HHZJ}. A similar proof can be established for a general constant \(\rho\). Hence, the detailed proof of Lemmas \ref{lem:sinlay} and \ref{lem:npz} is omitted. Lemma \ref{lem:sinlay} characterizes the action of the single layer potential operator on the vector fields $e^{\mathrm{i}n\theta}\bm{\upsilon}$ and $e^{\mathrm{i}n\theta}\bm{t} $.

\begin{lem}\label{lem:sinlay}
	The single-layer potentials $\mathbf{S}_{\partial D_{R}}^{\omega}[e^{\mathrm{i}n\theta}\bm{\upsilon}]$ and $\mathbf{S}_{\partial D_{R}}^{\omega}[e^{\mathrm{i}n\theta}\bm{t}]$ have the following expressions  for $|\bx|=R$:
	\begin{align}
		\mathbf{S}_{\partial D_{R}}^{\omega}\left[e^{\mathrm{i} n \theta}\bm{\upsilon }\right](\bx)&=\alpha_{1 n} e^{\mathrm{i} n \theta} \bm{\upsilon}+\alpha_ {2 n} e^{\mathrm{i} n \theta} \bm{t},\label{equ:svt} \\ 
		\mathbf{S}_{\partial D_{R}}^{\omega}\left[e^{\mathrm{i} n \theta} \bm{t}\right](\bx)&=\alpha_{3 n} e^{\mathrm{i} n \theta} \bm{\upsilon}+\alpha_{4 n} e^{\mathrm{i} n \theta} \mathbf{t},\notag
	\end{align}
	where
	\begin{align}
		\alpha_{1n} &=-\frac{\mathrm{i}\pi}{2\omega^{2}\rho R}\left(n^{2}J_{n}(k_{s}R)H_{n}(k_{s}R)+k_{p}^{2}R^{2}J_{n}^{\prime}(k_{p}R)H_{n}^{\prime}(k_{p}R)\right), \notag\\
		\alpha_{2n} &=\frac{n\pi}{2\omega^{2}\rho}\left(k_{s}J_{n}(k_{s}R)H_{n}^{\prime}(k_{s}R)+k_{p}J_{n}^{\prime}(k_{p}R)H_{n}(k_{p}R)\right), \notag\\
		\alpha_{3n} &=-\frac{n\pi}{2\omega^{2}\rho}\left(k_{s}J_{n}^{\prime}(k_{s}R)H_{n}(k_{s}R)+k_{p}J_{n}(k_{p}R)H_{n}^{\prime}(k_{p}R)\right), \notag\\
		\alpha_{4n} &=-\frac{\mathrm{i}\pi}{2\omega^{2}\rho R}\left(k_{s}^{2}R^{2}J_{n}^{\prime}(k_{s}R)H_{n}^{\prime}(k_{s}R)+n^{2}J_{n}(k_{p}R)H_{n}(k_{p}R)\right). \notag
	\end{align}
	
	The single-layer potentials $\mathbf{S}_{\partial D_{R}}^{\omega}[e^{\mathrm{i}n\theta}\bm{\upsilon}]$ and $\mathbf{S}_{\partial D_{R}}^{\omega}[e^{\mathrm{i}n\theta}\bm{t}]$ have the following expressions for $\bx\in\mathbb{R}^{2}\backslash\overline{D}_{R}$:
	\begin{align}\label{lem:qninutw}
		\mathbf{S}_{\partial D_{R}}^{\omega}[e^{\mathrm{i}n\theta}\bm{\upsilon }](\mathbf{x})&= \frac{-\mathrm{i}\pi}{4\omega^{2}\rho R}\left(nk_{s}RJ_{n}(k_{s}R)\bm{\Psi}_{n}^{s,o}(k_{s}|\mathbf{x}|)+k_{p}^{2}R^{2}J_{n}^{\prime}\big(k_{p}R\big)\bm{\Psi}_n^{p,o}(k_{p}|\mathbf{x}|)\right), \\
		\mathbf{S}_{\partial D_{R}}^{\omega}[e^{\mathrm{i}n\theta}\mathbf{t}](\mathbf{x})&= \frac{-\pi}{4\omega^{2}\rho R}\left(k_{s}^{2}R^{2}J_{n}^{\prime}(k_{s}R)\bm{\Psi}_{n}^{s,o}(k_{s}|\mathbf{x}|)+nk_{p}RJ_{n}(k_{p}R)\bm{\Psi}_n^{p,o}(k_{p}|\mathbf{x}|)\right), \notag
	\end{align}
	where
	\begin{align}
		\bm{\Psi}_{n}^{s,o}(k_{s}|\mathbf{x}|)&= \frac{2nH_{n}(k_{s}|\mathbf{x}|)}{k_{s}|\mathbf{x}|}e^{\mathrm{i}n\theta}\bm{\upsilon }+2\mathrm{i}H_{n}^{\prime}(k_{s}|\mathbf{x}|)e^{\mathrm{i}n\theta}\bm{t},\label{eq:qp0} \\
		\bm{\Psi}_n^{p,o}(k_p|\mathbf{x}|)&= 2H_{n}^{\prime}(k_{p}|\mathbf{x}|)e^{\mathrm{i}n\theta}\bm{\upsilon}+\frac{2\mathrm{i}nH_{n}(k_{p}|\mathbf{x}|)}{k_{p}|\mathbf{x}|}e^{\mathrm{i}n\theta}\bm{t}. \notag
	\end{align}
	Moreover, the function $\bm{\Psi}_{n}^{s,o}(k_{s}|\mathbf{x}|)$  pertains to the s-wave, and  $\bm{\Psi}_n^{p,o}\left(k_p|\mathbf{x}|\right)$ pertains to the p-wave. Both of them are radiating solutions to the equation $(\mathcal{L}_{\lambda,\mu}+\omega^{2}{\rho} )\mathbf{u}=0$ in $\bx\in\mathbb{R}^{2}\backslash\overline{D}_{R}$.
	
	Besides, the single-layer potentials $\mathbf{S}_{\partial D_{R}}^{\omega}[e^{\mathrm{i}n\theta}\bm{\upsilon}]$ and $\mathbf{S}_{\partial D_{R}}^{\omega}[e^{\mathrm{i}n\theta}\bm{t}]$ have the following expressions for $\bx\in B_R$:
	\begin{align}\label{lem:qninut}
		\mathbf{S}_{\partial D_{R}}^{\omega}[e^{\mathrm{i}n\theta}\bm{\upsilon }](\bx)&=\frac{-\mathrm{i}\pi}{4\omega^{2}\rho R}\left(nk_{s}RH_{n}(k_{s}R)\bm{\Psi}_{n}^{s,i}(k_{s}|\bx|)+k_{p}^{2}R^{2}H_{n}^{\prime}\big(k_{p}R\big)\bm{\Psi}_{n}^{p,i}(k_{p}|\mathbf{x}|)\right),\\
		\mathbf{S}_{\partial D_{R}}^{\omega}[e^{\mathrm{i}n\theta}\bm{t}](\bx)&=\frac{-\pi}{4\omega^{2}\rho R}\left(k_{s}^{2}R^{2}H_{n}^{\prime}(k_{s}R)\bm{\Psi}_{n}^{s,i}(k_{s}|\bm{x}|)+nk_{p}RH_{n}(k_{p}R)\bm{\Psi}_{n}^{p,i}(k_{p}|\bm{x}|)\right),\notag
	\end{align}
	where
	\begin{align}
		\bm{\Psi}_{n}^{s,i}(k_{s}|\bm{x}|)&=\frac{2nJ_{n}(k_{s}|\bm{x}|)}{k_{s}|\bm{x}|}e^{\mathrm{i}n\theta}\bm{\upsilon}+2\mathrm{i}J_{n}^{\prime}(k_{s}|\bm{x}|)e^{\mathrm{i}n\theta}\bm{t},\label{lem:qni}\\
		\bm{\Psi}_{n}^{p,i}(k_{p}|\bm{x}|)&=2J_{n}^{\prime}(k_{p}|\bm{x}|)e^{\mathrm{i}n\theta}\bm{\upsilon}+\frac{2\mathrm{i}nJ_{n}(k_{p}|\bm{x}|)}{k_{p}|\bm{x}|}e^{\mathrm{i}n\theta}\bm{t}.\notag
	\end{align}
	Moreover, the function $\bm{\Psi}_{n}^{s,i}(k_{s}|\mathbf{x}|)$  pertains to the s-wave, and the function $\bm{\Psi}_{n}^{p,i}(k_p|\mathbf{x}|)$ pertains to the p-wave. Both of them are entire solutions to the equation $(\mathcal{L}_{\lambda,\mu}+\omega^{2}{\rho} )\mathbf{u}=0$ in $\bx\in{D}_{R}$.
\end{lem}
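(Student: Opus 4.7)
The plan is to compute the action of the single-layer potential mode-by-mode by means of Graf's addition formula
\[
H_0(k|\bx-\by|)=\sum_{n\in\mathbb{Z}}J_n(kr_<)H_n(kr_>)\,e^{\mathrm{i}n(\theta_\bx-\theta_\by)},\quad r_<=\min(|\bx|,|\by|),\ r_>=\max(|\bx|,|\by|),
\]
valid whenever $|\bx|\ne|\by|$. Substituting this representation separately into the shear piece $-\frac{\mathrm{i}}{4\mu}\delta_{ij}H_0(k_s|\bx-\by|)$ and into the mixed piece $\frac{\mathrm{i}}{4\omega^2\rho}\partial_i\partial_j\bigl(H_0(k_p|\bx-\by|)-H_0(k_s|\bx-\by|)\bigr)$ of the Kupradze tensor $\bGa^{\omega}$, and rewriting the Cartesian derivatives in polar coordinates via $\partial_1=\cos\theta\,\partial_r-r^{-1}\sin\theta\,\partial_\theta$ and $\partial_2=\sin\theta\,\partial_r+r^{-1}\cos\theta\,\partial_\theta$, one expresses $\bGa^{\omega}(\bx-\by)$ as an absolutely convergent series in $n$ whose angular dependence on $\by\in\partial D_R$ is carried entirely by $e^{-\mathrm{i}n\theta_\by}$ and whose tensor structure is naturally displayed in the local frames $\{\bm{\upsilon}(\bx),\bm{t}(\bx)\}$ and $\{\bm{\upsilon}(\by),\bm{t}(\by)\}$.

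Once the kernel is in this form, integrating against the densities $e^{\mathrm{i}n\theta_\by}\bm{\upsilon}(\by)$ or $e^{\mathrm{i}n\theta_\by}\bm{t}(\by)$ collapses the series by orthogonality of $\{e^{\mathrm{i}m\theta_\by}\}_{m\in\mathbb{Z}}$ on $\partial D_R$: only the single index $n$ survives. For $|\bx|<R$ the $\bx$-dependence is carried by $J_n(k_{s,p}|\bx|)$ (and derivatives thereof) while the boundary radius appears through the companion Hankel values $H_n(k_{s,p}R),\,H_n'(k_{s,p}R)$; for $|\bx|>R$ the roles of $J_n$ and $H_n$ swap, which is exactly why the interior ansatz \eqref{lem:qninut} contains $H_n,H_n'$ at $k_{s,p}R$ and the exterior ansatz \eqref{lem:qninutw} contains $J_n,J_n'$ at $k_{s,p}R$. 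Next, I would apply the recurrences \eqref{eq:dtgx} to convert every $J_{n\pm1}$, $H_{n\pm1}$ into $J_n,J_n',H_n,H_n'$, then re-express the result in the local frame at $\bx$; this yields the formulas \eqref{lem:qninutw} and \eqref{lem:qninut}, from which the vector fields $\bm{\Psi}_n^{s,o},\bm{\Psi}_n^{p,o},\bm{\Psi}_n^{s,i},\bm{\Psi}_n^{p,i}$ are read off. Their status as solenoidal and irrotational (hence radiating or entire) solutions of $(\mathcal{L}_{\lambda,\mu}+\omega^2\rho)\bu=0$ can then be verified by direct substitution, or by identifying them as the canonical Helmholtz decomposition pieces at wavenumbers $k_s$ and $k_p$. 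The boundary expressions for $|\bx|=R$ follow from continuity of the single-layer across $\partial D_R$ combined with the Wronskian identity $J_n(t)H_n'(t)-J_n'(t)H_n(t)=2\mathrm{i}/(\pi t)$, which collapses mixed terms and produces $\alpha_{1n},\ldots,\alpha_{4n}$.

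The main technical hurdle is the derivative bookkeeping attached to the second term in $\bGa^{\omega}$. The operator $\partial_i\partial_j$ applied to $H_0(k|\bx-\by|)$ mixes radial and angular variations on both the $\bx$ and $\by$ sides, so after substituting Graf's expansion each of the four combinations at $k_sR$, $k_pR$, $k_s|\bx|$, $k_p|\bx|$ appears with prefactors built from $n$, $k_s$, $k_p$, $R^{\pm 1}$, and one must reorganize these---using \eqref{eq:dtgx} and the occasional cancellations between the $k_s$- and $k_p$-contributions coming from the difference $H_0(k_p|\cdot|)-H_0(k_s|\cdot|)$---to reach the compact factorized form displayed in the statement. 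Absolute convergence of the series, the jump behavior across $\partial D_R$, and the radiation condition satisfied by $\bm{\Psi}_n^{s,o},\bm{\Psi}_n^{p,o}$ follow from standard Bessel/Hankel asymptotics. As remarked below the statement, the analogous computation is carried out for $\rho=1$ in \cite[Theorem 1, Proposition 1, Lemma 8--10]{HHZJ}; since $\rho$ enters $\bGa^{\omega}$ only through the prefactor $1/(\omega^2\rho)$ (after rewriting $1/\mu=k_s^2/(\omega^2\rho)$) and through $k_s,k_p$, the general case is obtained line by line from that reference.
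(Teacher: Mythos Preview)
Your proposal is correct and follows the same approach as the paper: the paper omits the proof entirely, citing \cite[Theorem 1, Proposition 1, Lemma 8--10]{HHZJ} for the case $\rho=1$ and noting that the general constant $\rho$ case is analogous. Your outline via Graf's addition theorem, orthogonality, and the Bessel recurrences is precisely the method behind that reference, and your observation that $\rho$ enters only through $1/(\omega^2\rho)$ and the wavenumbers $k_s,k_p$ is exactly the remark the paper makes to pass to general $\rho$.
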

\begin{lem}\label{lem:npz}
	The N-P operator $\mathbf{K}_{\partial D_{R}}^{\omega,*}$ have the following expressions with two densities $e^{\mathrm{i}n\theta}\bm{\upsilon}$ and $e^{\mathrm{i}n\theta}\bm{t}$,
	\begin{align}
		\mathbf{K}_{\partial D_{R}}^{\omega,*}\left[e^{\mathrm{i}n\theta}\bm{\upsilon}\right]&=a_{1n}e^{\mathrm{i}n\theta}\bm{\upsilon}+a_{2n}e^{\mathrm{i}n\theta}\bm{t},\label{equ:kvt}\\
		\mathbf{K}_{\partial D_{R}}^{\omega,*}\left[e^{\mathrm{i}n\theta}\bm{t}\right]&=b_{1n}e^{\mathrm{i}n\theta}\bm{\upsilon}+b_{2n}e^{\mathrm{i}n\theta}\bm{t},\notag
	\end{align}
	where
	\begin{displaymath}
		a_{1n}=-\frac{1}{2}+g_{1,n}(R),\quad a_{2n}=g_{2,n}(R),\quad b_{1n}=g_{3,n}(R),\quad b_{2n}=-\frac{1}{2}+g_{4,n}(R).
	\end{displaymath}
	The coefficients $g_{i,n}(R)$ ($1 \leq i \leq 4$) can be derived from the tractions $\partial_{\nu}\mathbf{S}_{\partial D_{R}}^{\omega}[e^{\mathrm{i}n\theta}\bm{\upsilon}]\big|_{+}$ and $\partial_{\nu}\mathbf{S}_{\partial D_{R}}^{\omega}[e^{\mathrm{i}n\theta}\bm{t}]\big|_{+}$ evaluated on $\partial D_{R}$, as defined by the following expressions
	\begin{align}
		\partial_{\nu}\mathbf{S}_{\partial D_{R}}^{\omega}[e^{\mathrm{i}n\theta}\bm{\upsilon}]|_{+}&=g_{1,n}(|\bx|)e^{\mathrm{i}n\theta}\bm{\upsilon}+g_{2,n}(|\bx|)e^{\mathrm{i}n\theta}\bm{t},\notag\\
		\partial_{\nu}\mathbf{S}_{\partial D_{R}}^{\omega}[e^{\mathrm{i}n\theta}\bm{t}]|_{+}&=g_{3,n}(|\bx|)e^{in\theta}\bm{\upsilon}+g_{4,n}(|\bx|)e^{in\theta}\bm{t},\notag
	\end{align}
	where
	\begin{align}
		g_{1,n}(|\bx|)=&\frac{\mathrm{i}\pi}{2\omega^{2}\rho R^{2}}\Big(2\mu n^{2}J_{n}(k_{s}R)\big(H_{n}(k_{s}|\bx|)-k_{s}RH_{n}^{\prime}(k_{s}|\bm{x}|)\big)+ \notag\\
		&J_n^{\prime}(k_pR)k_pR(H_n(k_p|\bx|)(\omega^2\rho R^2-2\mu n^2)+2k_p\mu RH_n^{\prime}(k_p|\bx|))\Big), \notag\\
		g_{2,n}(|\bx|)=& - \frac{n\mu\pi}{2\omega^{2}\rho R^{2}}\Big(J_{n}(k_{s}R)H_{n}(k_{s}|\bx|)\Big(k_{s}^{2}R^{2}-2n^{2}\Big)+ \notag\\
		&2R\left(k_{s}J_{n}(k_{s}R)H_{n}^{\prime}(k_{s}|\bx|)+k_{p}J_{n}^{\prime}\left(k_{p}R\right)\left(H_{n}(k_{p}|\bx|)-k_{p}RH_{n}^{\prime}(k_{p}|\bx|)\right)\right)\Big), \notag\\
		g_{3,n}(|\bx|)=& \frac{n\pi}{2\omega^{2}\rho R^{2}}\big(J_{n}\big(k_{p}R\big)H_{n}\big(k_{p}\big|\mathbf{x}\big|\big)\big(\omega^{2}\rho R^{2}-2\mu n^{2}\big)+ \notag\\
		&2\mu R\big(k_pJ_n\big(k_pR\big)H_n'\big(k_p|\bx|\big)+k_sJ_n'(k_sR)\big(H_n(k_s|\bx|)-k_sRH_n'(k_s|\bx|)\big)\big)\big), \notag\\
		g_{4,n}(|\bx|)=& \frac{\mathrm{i}\mu\pi}{2\omega^{2}\rho R^{2}}\big(2n^{2}J_{n}\big(k_{p}R\big)\big(H_{n}\big(k_{p}|\bx|\big)-k_{p}RH_{n}^{\prime}\big(k_{p}|\bm{x}|\big)\big)+ \notag\\
		&J_n^{\prime}(k_sR)k_sR\left(k_s^2R^2H_n(k_s|\bx|)+2k_sRH_n^{\prime}(k_s|\bx|)-2n^2H_n(k_s|\bx|)\right).\notag
	\end{align}
\end{lem}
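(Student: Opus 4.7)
\textbf{Proof plan for Lemma \ref{lem:npz}.}

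The natural route is to obtain $\mathbf{K}_{\partial D_R}^{\omega,*}$ as a boundary trace of the single-layer potential via the jump relation \eqref{eq:jump}: for $\bvarphi\in\{e^{\mathrm{i}n\theta}\bm{\upsilon},\, e^{\mathrm{i}n\theta}\bm{t}\}$, we have
\begin{equation*}
\mathbf{K}_{\partial D_R}^{\omega,*}[\bvarphi](\bx)=\partial_{\bnu}\mathbf{S}_{\partial D_R}^{\omega}[\bvarphi]\big|_{+}(\bx)-\tfrac{1}{2}\bvarphi(\bx),\qquad \bx\in\partial D_R,
\end{equation*}
so the whole task reduces to evaluating $\partial_{\bnu}\mathbf{S}_{\partial D_R}^{\omega}[\bvarphi]|_{+}$ for these two densities. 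The first step is therefore to insert the exterior representations \eqref{lem:qninutw} from Lemma \ref{lem:sinlay} into the co-normal derivative and let $|\bx|\to R^{+}$; the constant $-1/2$ on the diagonal of $a_{1n}$ and $b_{2n}$ comes precisely from the jump term.

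Next I would compute $\partial_{\bnu}$ in polar coordinates. Writing a radial/tangential decomposition $\bu=u_r\bm{\upsilon}+u_\theta\bm{t}$ with the angular dependence carried by a single $e^{\mathrm{i}n\theta}$ factor, the formulas for $\nabla\cdot\bu$ and $\nabla^s\bu$ on a disk give
\begin{equation*}
\partial_{\bnu}\bu=\Bigl[(\lambda+2\mu)\partial_r u_r+\tfrac{\lambda}{r}\bigl(u_r+\partial_\theta u_\theta\bigr)\Bigr]\bm{\upsilon}+\mu\Bigl[\partial_r u_\theta-\tfrac{u_\theta}{r}+\tfrac{1}{r}\partial_\theta u_r\Bigr]\bm{t}.
\end{equation*}
From \eqref{eq:qp0} the radial and tangential components of $\bm{\Psi}_n^{s,o}(k_s|\bx|)$ and $\bm{\Psi}_n^{p,o}(k_p|\bx|)$ are explicit combinations of $H_n(kr)$, $H_n'(kr)$, $n/r$, and $k$; since the $\theta$-dependence is pure $e^{\mathrm{i}n\theta}$ throughout, $\partial_\theta$ merely multiplies by $\mathrm{i}n$. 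Substituting into the expressions \eqref{lem:qninutw} and collecting the coefficients of $e^{\mathrm{i}n\theta}\bm{\upsilon}$ and $e^{\mathrm{i}n\theta}\bm{t}$ at $r=R$ yields four functions which are exactly $g_{1,n}(R),\ldots,g_{4,n}(R)$ by the definition stated in the lemma. The replacement $|\bx|\mapsto R$ in the formulas for $g_{i,n}(|\bx|)$ confirms that evaluating these traces at any radius $|\bx|\neq R$ gives the stated off-boundary expressions.

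The main obstacle is the algebraic simplification that lets the answer be packaged in the clean form listed for $g_{i,n}(|\bx|)$. Differentiating $H_n'$ produces second derivatives $H_n''$, which I would eliminate using the Bessel equation $H_n''(t)=-H_n'(t)/t-(1-n^2/t^2)H_n(t)$. The resulting terms $(n^2-t^2)H_n(t)$ must be regrouped using the dispersion identities $\mu k_s^2=\omega^2\rho$ and $(\lambda+2\mu)k_p^2=\omega^2\rho$ (from \eqref{pa:ksp}) in order to trade $(\lambda+2\mu)$ and cross terms for the factor $\omega^2\rho R^2-2\mu n^2$ that appears in $g_{1,n}$ and $g_{3,n}$. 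The recurrence identities \eqref{eq:dtgx} (and their Hankel analogues) are what convert the mixed index expressions into the combinations $nJ_n(kR)H_n(k|\bx|)$, $kRJ_n'(kR)H_n'(k|\bx|)$, etc., which ultimately match the target formulas. Everything else is routine bookkeeping; once the computations for density $e^{\mathrm{i}n\theta}\bm{\upsilon}$ are carried out, the ones for $e^{\mathrm{i}n\theta}\bm{t}$ follow by an entirely parallel procedure, yielding $g_{3,n}$ and $g_{4,n}$ and completing the expressions \eqref{equ:kvt}.
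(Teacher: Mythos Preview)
Your plan is correct and is the natural direct computation: use the jump relation \eqref{eq:jump} to reduce to the exterior traction, plug in the explicit outgoing representations \eqref{lem:qninutw}--\eqref{eq:qp0}, differentiate in polar coordinates, and simplify with the Bessel ODE and the dispersion relations $\mu k_s^2=(\lambda+2\mu)k_p^2=\omega^2\rho$.

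There is no proof in the paper to compare against: the authors omit the argument entirely, pointing to \cite[Theorem~1, Proposition~1, Lemmas~8--10]{HHZJ} for the case $\rho=1$ and asserting that the general constant-$\rho$ case is handled in the same way. What you have written is precisely the computation that underlies that reference, so your proposal is consistent with (indeed more explicit than) what the paper offers.
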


We present the selection of an incident wave that is pertinent to our study. Based on  Lemmas \ref{lem:sinlay} and \ref{lem:npz}, and through direct computations, we obtain that the function $\bm{\Psi}_{n}^{s,i}$  defined in \eqref{lem:qni} satisfies the elastic equation $\Lcal_{\lambda,\mu}\bm{\Psi}_{n}^{s,i}+\omega^{2}\rho\bm{\Psi}_{n}^{s,i}=0$. Therefore, we select $\bu^{i}$, which is an entire solution to \eqref{eq:inci}, as shown below
\begin{equation}\label{eq:ui}
	{\bu}^{i}=\kappa \bm{\Psi}_{n}^{s,i},
\end{equation}
where $\kappa \in \mathbb{C}$ is nonzero constant. 

In Theorem \ref{thm:isl}, under the assumptions \eqref{eq:hicon}-\eqref{eq:de} and \eqref{eq:lambda=}, we rigorously prove that both the interior total field \( \mathbf{u}|_D \) and the exterior scattered field \( \mathbf{u}^s|_{\mathbb{R}^2 \setminus \overline{D}} \) of \eqref{eq:xtm} are boundary localized in the sense of  Definition \ref{def:surface localized}, provided that the incident wave \( \mathbf{u}^i \) is chosen as in \eqref{eq:ui} with a sufficiently large \( n \). Recall the definitions of \(\mathcal{N}_{D,\xi_{1}}^{-} \) and \(\mathcal{N}_{D,\xi_{2}}^{+} \) provided in \eqref{eq:Mdef}, where in  Theorem \ref{thm:isl} we set \(\xi_1 = 1 - \gamma_1\) and \(\xi_2 = \gamma_2 - 1\), with \(\gamma_1 \in (0, 1)\) and \(\gamma_2 \in (1, R)\) being constants. To facilitate the proof of Theorem \ref{thm:isl}, we present the following lemma.
\begin{lem}\label{lem:thmxy}
	Consider the constants $\mathcal{R}_1$ and $\mathcal{R}_2$ as defined in \eqref{eq:R1} and \eqref{eq:R2}. We introduce the constants \(K_1\) and \(K_2\) as:
	\begin{align}\label{eq:lemk1k2}
		K_{1} = \frac{2(1-2\gamma_{1}^2+\gamma_{1}^4)}{\mathcal{R}_{1}}, \quad K_{2} =\frac{2\left(1-2\gamma_{2}^2+\gamma_{2}^4\right)}{\mathcal{R}_{2}},
	\end{align}
	where \(\gamma_1 \in (0, 1)\) and \(\gamma_2 \in (1, R)\) are constants. We formally define the integers \(n_2\) and \(n_4\) as follows:
	\begin{align}\label{eq:n24}
		n_{2}=\left\{
		\begin{array}
			{ll}n_{K_{1}}, &K_{1} > \frac{e^2(\ln\gamma_{1})^2}{4}, \\
			1, & K_{1} \leq \frac{e^2(\ln\gamma_{1})^2}{4},
		\end{array}\right.\quad
		n_{4}=\left\{
		\begin{array}
			{ll}n_{K_{2}}, &K_{2} > \frac{e^2(\ln\gamma_{2})^2}{4}, \\
			1, & K_{2} \leq \frac{e^2(\ln\gamma_{2})^2}{4},
		\end{array}\right.
	\end{align}
	where
	\begin{displaymath}
		n_{K_{1}}= \left\lfloor\frac{2}{\ln\gamma_{1}}W_{-1}\left(\frac{\ln\gamma_{1}}{2\sqrt{K_{1}}}\right)\right\rfloor+1,\quad n_{K_{2}} = \left\lfloor\frac{-2}{\ln\gamma_{2}}W_{-1}\left(\frac{-\ln\gamma_{2}}{2\sqrt{K_{2}}}\right)\right\rfloor+1,
	\end{displaymath}
	with \(W_{-1}\left(\cdot\right)\) denoting a branch of the Lambert \(W\) function.
	Consequently, if \( n \geq n_2 \), then \( K_1 n^2 \leq \gamma_1^{-n} \); similarly, if \( n \geq n_4 \), then \( K_2 n^2 \leq \gamma_2^{n} \).
\end{lem}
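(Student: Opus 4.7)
The plan is to transform each inequality into the canonical form $ue^u\ge y$ that the Lambert $W$-function inverts, and then let the two cases in \eqref{eq:n24} emerge naturally from whether a real solution to the resulting transcendental equation exists. I would treat the two claims separately but along identical lines.

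For the inequality $K_1 n^2\le \gamma_1^{-n}$, I would first take square roots and rewrite it equivalently as $\sqrt{K_1}\,n\,\gamma_1^{n/2}\le 1$. The substitution $u=\tfrac{n}{2}\ln\gamma_1$, which is negative since $\gamma_1\in(0,1)$, gives $n=2u/\ln\gamma_1$ and converts the inequality into $ue^u\ge y_1$ with $y_1:=\ln\gamma_1/(2\sqrt{K_1})<0$. A standard analysis of $f(u)=ue^u$ on $(-\infty,0)$ shows that $f$ is strictly decreasing on $(-\infty,-1)$ from $0^-$ to the minimum $-1/e$ at $u=-1$ and then strictly increasing on $(-1,0)$ back to $0^-$. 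Consequently a real $u\le -1$ solving $ue^u=y_1$ exists if and only if $y_1\ge -1/e$, i.e.\ $K_1\ge e^2(\ln\gamma_1)^2/4$, and on that decreasing branch the inequality $ue^u\ge y_1$ is equivalent to $u\le W_{-1}(y_1)$.

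In the first subcase $K_1>e^2(\ln\gamma_1)^2/4$, translating $u\le W_{-1}(y_1)$ back via $n=2u/\ln\gamma_1$ and remembering that division by the negative quantity $\ln\gamma_1$ reverses the inequality, I would obtain $n\ge (2/\ln\gamma_1)\,W_{-1}(y_1)$, so every integer $n\ge n_{K_1}=\lfloor(2/\ln\gamma_1)W_{-1}(y_1)\rfloor+1$ works, matching the first branch of $n_2$. In the opposite subcase $K_1\le e^2(\ln\gamma_1)^2/4$, the function $h(x)=\gamma_1^{-x}/x^2=e^{-x\ln\gamma_1}/x^2$ has derivative vanishing only at $x^\ast=-2/\ln\gamma_1>0$, where it attains its global minimum $h(x^\ast)=e^2(\ln\gamma_1)^2/4$; hence under the threshold assumption $K_1\le h(x^\ast)\le h(n)$ for every integer $n\ge 1$, which is exactly $K_1 n^2\le\gamma_1^{-n}$, so $n_2=1$ suffices.

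The argument for $K_2 n^2\le \gamma_2^n$ is entirely parallel, with the sign change absorbed by a sign-flipped substitution: since $\ln\gamma_2>0$, I would set $u=-\tfrac{n}{2}\ln\gamma_2<0$, giving $n=-2u/\ln\gamma_2$ and reducing the inequality to $ue^u\ge -\ln\gamma_2/(2\sqrt{K_2})$. The same two-branch analysis of $f(u)=ue^u$ yields the threshold $K_2=e^2(\ln\gamma_2)^2/4$; above it one finds $n\ge (-2/\ln\gamma_2)\,W_{-1}(-\ln\gamma_2/(2\sqrt{K_2}))$, reproducing $n_{K_2}$, while below it the minimum analysis of $\gamma_2^x/x^2$ again allows $n_4=1$. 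There is no genuine analytic difficulty in this lemma; the only point requiring care is the bookkeeping of signs, namely tracking $\ln\gamma_1<0$ versus $\ln\gamma_2>0$ when reversing inequalities, and selecting the $W_{-1}$ (rather than $W_0$) branch because the asymptotic regime of large $n$ corresponds to $u\le -1$ on the decreasing branch of $f$.
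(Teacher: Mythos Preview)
Your proposal is correct and follows essentially the same approach as the paper. The paper's own proof merely introduces the Lambert $W$ function, states its branch structure, and then asserts the conclusions ``by employing the Lambert $W$ function and performing detailed computations,'' without displaying the substitution $u=\tfrac{n}{2}\ln\gamma_1$, the reduction to $ue^u\ge y$, or the minimum analysis of $\gamma_1^{-x}/x^2$; you have supplied precisely those omitted details, and your sign bookkeeping and branch selection are accurate.
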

\begin{proof}
	The parameters \(\lambda\), \(\mu\), and \(\tau\) are defined in \eqref{eq:hicon}-\eqref{eq:de}. Under the conditions \eqref{eq:lambda=} and  $\tau < 1$, it follows that $\mathcal{R}_{1} = \mathcal{O}(1)$ and $\mathcal{R}_{2} = \mathcal{O}(1)$. 
	
	To facilitate the proof of Lemma \ref{lem:thmxy}, we introduce the Lambert $W$ function \cite{RGD}, a multivalued function in complex analysis defined as the inverse branches of the transcendental equation $f(W) = We^W$. For each integer $k$, the $k$-th branch $W_k(z)$ satisfies the defining relation:
	\[ z = W_k(z)e^{W_k(z)} \quad \text{for} \quad z \in \mathbb{C}. \]
	For real-valued arguments, the two most relevant branches are \( W_0 \) and \( W_{-1} \): the principal branch \( W_0(z) \in [-1, +\infty) \) is defined for real \( z \in [-1/e, +\infty) \), whereas \( W_{-1}(z) \in (-\infty, -1] \) is defined for real \( z \in [-1/e, 0) \). By employing the Lambert \( W \) function and performing detailed computations, we obtain the following results. When \( K_1 \leq \frac{e^2(\ln \gamma_1)^2}{4} \), the inequality \( K_1 n^2 \leq \gamma_1^{-n} \) holds for all natural numbers \( n \in \mathbb{N} \). In the complementary case where \( K_1 > \frac{e^2(\ln \gamma_1)^2}{4} \), there exists a critical index
	\[
	n_{K_1} = \left\lfloor \frac{2}{\ln \gamma_1} W_{-1} \left( \frac{\ln \gamma_1}{2\sqrt{K_1}} \right) \right\rfloor + 1,
	\]
	such that the inequality holds for all \( n \geq n_{K_1} \). Similarly, when \( K_2 \leq \frac{e^2(\ln \gamma_2)^2}{4} \), the inequality \( K_2 n^2 \leq \gamma_2^n \) holds for all \( n \geq 1 \). In the case where \( K_2 > \frac{e^2(\ln \gamma_2)^2}{4} \), the corresponding critical index is given by
	\[
	n_{K_2} = \left\lfloor \frac{-2}{\ln \gamma_2} W_{-1} \left( \frac{-\ln \gamma_2}{2\sqrt{K_2}} \right) \right\rfloor + 1.
	\]
	Based on the detailed computations above, it follows that if \( n \geq n_2 \), then \( K_1 n^2 \leq \gamma_1^{-n} \), and if \( n \geq n_4 \), then \( K_2 n^2 \leq \gamma_2^{n} \).
\end{proof}

We are now prepared to present the theorem concerning boundary localization.
\begin{thm}\label{thm:isl}
	Consider the elastic scattering problem described by \eqref{eq:xtm}, which involves an elastic material \(\left(D;\tilde{\lambda},\tilde{\mu},\tilde{\rho}\right)\) embedded in  homogeneous elastic medium \(\left(\mathbb{R}^2 \setminus \overline{D};\lambda,\mu,\rho\right)\), where \(D\) is a unit disk centered at the origin in \(\mathbb{R}^2\). Let \(D_R\) represent a disk of radius \(R\), centered at the origin in \(\mathbb{R}^2\), such that \(D \subseteq D_R\).  Let \(\bu\) denote the internal total wave field in \(D\), and  \(\bu^s\) represent the external scattered field in \(\mathbb{R}^2 \setminus \overline{D}\), both of which satisfy \eqref{eq:xtm} for an incident wave \(\bu^i\) defined in \eqref{eq:ui}. Under the assumptions \eqref{eq:hicon}-\eqref{eq:de} and \eqref{eq:lambda=}, for any \(\gamma_1 \in (0, 1)\), \(\gamma_2 \in (1, R)\), and a sufficiently small \(\varepsilon \in \mathbb{R}_+\), if the index \(n\) associated with \(\bu^i\) satisfies the condition 
	\begin{align}\label{eq:condition}
		n \geq \max(n_1, n_2, n_3, n_4),
	\end{align} where
	\begin{align}\label{eq:n13}
		n_1 = \left\lfloor\frac{\ln\varepsilon}{\ln\gamma_1}\right\rfloor + 1, \quad n_3 = \left\lfloor-\frac{\ln\varepsilon}{\ln\gamma_2}\right\rfloor + 1,
	\end{align}
	with $n_2$ and $n_4$ defined in \eqref{eq:n24}
	then we have
	$$
	\frac{\|\mathbf{u}\|_{L^2(D \setminus \mathcal{N}_{D,1-\gamma_{1}}^{-})^2}^2}{\|\mathbf{u}\|_{L^2(D)^2}^2} \leq \varepsilon + \mathcal{O}\left(\varepsilon\omega^2\right) \ll 1, \quad \frac{\|\mathbf{u}^s\|_{L^2(\left(D_R \setminus \overline{D}\right) \setminus \mathcal{N}_{D,\gamma_{2}-1}^{+})^2}^2}{\|\mathbf{u}^s\|_{L^2(\mathbb{R}^2 \setminus D)^2}^2} \leq \varepsilon + \mathcal{O}\left(\varepsilon\omega^2\right)  { \ll }  1.
	$$
	Here, the quantity \(\varepsilon\) characterizes the boundary localization level, while \(\lfloor \cdot \rfloor\) denotes the floor function. 
\end{thm}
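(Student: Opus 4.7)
The plan is to reduce the transmission problem to a $4\times 4$ linear algebraic system via the spectral decomposition afforded by Lemmas \ref{lem:sinlay}--\ref{lem:npz}, then combine asymptotic inversion in $\omega$ with the small-argument Bessel/Hankel asymptotics \eqref{eq:jn}--\eqref{eq:hn} to turn the radial profiles into powers of $|\bx|^{\pm n}$ that manifestly concentrate near $\partial D$.

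Concretely, because $\bu^i=\kappa\bm{\Psi}_n^{s,i}$ is supported on the single angular mode $e^{\mathrm{i}n\theta}$, I would seek the densities in \eqref{eq:sol} in the form $\bvarphi_1=c_1 e^{\mathrm{i}n\theta}\bm{\upsilon}+c_2 e^{\mathrm{i}n\theta}\bm{t}$ and $\bvarphi_2=c_3 e^{\mathrm{i}n\theta}\bm{\upsilon}+c_4 e^{\mathrm{i}n\theta}\bm{t}$. Since both the interior and the exterior single-layer potentials and N-P operators act diagonally on each such mode by Lemmas \ref{lem:sinlay}--\ref{lem:npz}, the integral equation \eqref{eq:or} collapses to a $4\times 4$ linear system in $(c_1,c_2,c_3,c_4)$ whose entries are explicit expressions in the scalars $\alpha_{jn}, a_{jn}, b_{jn}$ and their tilded counterparts. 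Substituting \eqref{eq:jn}--\eqref{eq:hn} together with the high-contrast relations \eqref{eq:hicon}--\eqref{eq:de}, I would expand every entry in powers of $\omega$, isolate the leading terms and invert the system to obtain explicit leading-order formulas for $c_1,\dots,c_4$, with controlled $\mathcal{O}(\omega^2)$ remainders.

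With the densities in hand, I would substitute them into the interior representation \eqref{lem:qninut} for $\bu|_D=\td{\bS}_{\partial D}^\omega[\bvarphi_1]$ and the exterior representation \eqref{lem:qninutw} for $\bu^s|_{\mathbb{R}^2\setminus\overline D}=\bS_{\partial D}^\omega[\bvarphi_2]$. Applying \eqref{eq:jn}--\eqref{eq:hn} and the recurrences \eqref{eq:dtgx} once more, the radial components reduce at leading order to $|\bx|^n$ inside $D$ and $|\bx|^{-n}$ outside, multiplied by explicit polynomial-in-$n$ prefactors. The required $L^2$-ratios therefore boil down to the elementary integrals
\[
\frac{\int_0^{\gamma_1} r^{2n+1}\,dr}{\int_0^1 r^{2n+1}\,dr}=\gamma_1^{2n+2},\qquad \frac{\int_{\gamma_2}^{R} r^{-2n+1}\,dr}{\int_1^{R} r^{-2n+1}\,dr},
\]
which decay like $\gamma_1^{2n+2}$ and $\gamma_2^{-(2n-2)}$ respectively. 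Choosing $n\ge n_1$ and $n\ge n_3$ according to \eqref{eq:n13} forces these geometric factors below $\varepsilon$, while the $\mathcal{O}(\omega^2)$ remainders from the inversion step contribute the $\mathcal{O}(\varepsilon\omega^2)$ term in the final bound.

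The main obstacle is twofold. First, in contrast with the 3D setting, the shear and compressional modes are irreducibly coupled in 2D, so none of the four unknowns decouple; one must carefully track cancellations in the full $4\times 4$ inversion under the simultaneous scaling $\omega\to 0$, $\delta\to 0$, $\epsilon\to 0$ with $\tau=\sqrt{\delta/\epsilon}<1$, which is precisely the subtle spectral interplay highlighted in the introduction. Second, differentiating $J_n$ and $H_n$ via \eqref{eq:dtgx} produces explicit factors of $n$, and ultimately $n^2$ after squaring and integrating, that could overwhelm the geometric decay; to absorb them I would invoke Lemma \ref{lem:thmxy}, whose Lambert-$W$-based thresholds $n_2,n_4$ guarantee precisely that $K_1 n^2\le\gamma_1^{-n}$ for $n\ge n_2$ and $K_2 n^2\le\gamma_2^n$ for $n\ge n_4$. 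Combining the four thresholds yields the hypothesis \eqref{eq:condition} and completes the proof.
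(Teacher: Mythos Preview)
Your overall strategy---reduce to a $4\times4$ system on the single angular mode, expand in $\omega$, invert the leading matrix, substitute back into \eqref{lem:qninut} and \eqref{lem:qninutw}, and then estimate the $L^2$-ratios using Lemma~\ref{lem:thmxy}---is exactly what the paper does.

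There is one point where your sketch oversimplifies and would lead you astray if you actually carried it out. You write that the radial profiles ``reduce at leading order to $|\bx|^n$ inside $D$ and $|\bx|^{-n}$ outside'', and that the interior ratio therefore collapses to $\int_0^{\gamma_1}r^{2n+1}\,dr/\int_0^1 r^{2n+1}\,dr=\gamma_1^{2n+2}$. In fact the paper's asymptotics \eqref{eq:uae} show that the interior field carries \emph{two} adjacent powers, $|\bx|^{n-1}$ and $|\bx|^{n+1}$, with coefficients $\xi_1,\dots,\xi_4$ (and analogously $|\bx|^{-n-1}$, $|\bx|^{-n+1}$ outside). When you square and integrate, the resulting polynomial $1+\zeta_{4,n}\gamma_1^2+\zeta_{5,n}\gamma_1^4$ at $\gamma_1=1$ satisfies $1+\zeta_{4,n}+\zeta_{5,n}=\mathcal{R}_1/n^2+\mathcal{O}(1/n^3)$: the three terms nearly cancel. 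It is this near-cancellation in the \emph{denominator} of the energy ratio that produces the factor $n^2$ and necessitates the thresholds $n_2,n_4$; it is not, as you suggest, an artifact of differentiating Bessel functions (no gradients appear in this theorem). So your invocation of Lemma~\ref{lem:thmxy} is correct, but the mechanism generating the $n^2$ is a cancellation between the $r^{n-1}$ and $r^{n+1}$ contributions at $r=1$, and you should track both powers explicitly rather than collapsing to a single $r^n$.
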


\begin{proof}
	We consider the scattering problem described by \eqref{eq:xtm}, with the incident wave defined in \eqref{lem:qni}. According to the expressions for the function \(\bm{\Psi}_{n}^{s,i}\) in \eqref{lem:qni}, one has that on the boundary \(\partial D\):
	\begin{displaymath}
		{\bu}^{i}=f_{1,n}e^{\mathrm{i}n\theta}\bm{\upsilon }+f_{2,n}e^{\mathrm{i}n\theta}\bm{t },
	\end{displaymath}
	where
	\begin{displaymath}
		f_{1,n}=\frac{2n\kappa J_n(k_{s})}{k_{s}},\quad f_{2,n}=2\mathrm{i}\kappa J_{n}'(k_{s}).
	\end{displaymath}
	Moreover, by combining the definition of \(\partial_\nu\) provided in \eqref{eq:trac} with direct computations, we obtain the following on the boundary \(\partial D\):
	\begin{displaymath}
		\partial_\nu\bu^{i}=\tilde{f}_{1,n}e^{\mathrm{i}n\theta}\bm{\upsilon }+\tilde{f}_{2,n}e^{\mathrm{i}n\theta}\bm{t },
	\end{displaymath}
	with \begin{align}
		\tilde{f}_{1,n}&=\kappa \gamma_{1,n},\quad \gamma_{1n}=\frac{4n\mu}{k_s}\left(k_sJ_n'(k_s)-J_n(k_s)\right),\notag\\
		\tilde{f}_{2,n}&=\kappa \gamma_{2,n},\quad \gamma_{2n}=\frac{2\mathrm{i}\mu}{k_s}\left(\left(2n^2-k_s^2\right)J_n(k_s)-2k_sJ_n'(k_s)\right).\notag
	\end{align}
	Hence, the density functions \(\bm{\varphi}_{1}\) and \(\bm{\varphi}_{2}\) under the basis \(\left(e^{\mathrm{i}n\theta}\bm{\upsilon}, e^{\mathrm{i}n\theta}\bm{t}\right)\) can be expressed as
	\begin{align}\label{eq:phizh}
		\bm{\varphi}_1=\varphi_{1,1,n}e^{\mathrm{i}n\theta}\bm{\upsilon }+\varphi_{1,2,n}e^{\mathrm{i}n\theta}\bm{t },\quad\ \bm{\varphi}_2=\varphi_{2,1,n}e^{\mathrm{i}n\theta}\bm{\upsilon }+\varphi_{2,2,n}e^{\mathrm{i}n\theta}\bm{t }.
	\end{align}
	
	Using the jump formula \eqref{eq:jump}, along with \eqref{equ:svt} and \eqref{equ:kvt}, equation \eqref{eq:or} can be reformulated as:
	\begin{align}\label{eq:AXb}\bm{Ax}=\bm{b},\end{align}
	where
	\begin{align}\label{eq:axb}
		\bm{A}=\begin{bmatrix}
			\tilde{\alpha}_{1n} & \tilde{\alpha}_{3n} & -\alpha_{1n} & -\alpha_{3n} \\
			\tilde{\alpha}_{2n} & \tilde{\alpha}_{4n} & -\alpha_{2n} & -\alpha_{4n} \\
			\tilde{g}_{1n}-1 & \tilde{g}_{3n} & -g_{1n} & -g_{3n} \\
			\tilde{g}_{2n} & \tilde{g}_{4n}-1 & -g_{2n} & -g_{4n} \\
		\end{bmatrix},\quad
		\bm{x}=\begin{bmatrix}\varphi _{1,1,n} 
			\\\varphi _{1,2,n} 
			\\\varphi _{2,1,n} 
			\\\varphi _{2,2,n} 		
		\end{bmatrix},
		\quad
		\bm{b}=\begin{bmatrix}f _{1,n} 
			\\f _{2,n}  
			\\\tilde{f}_{1,n} 
			\\\tilde{f}_{2,n}  
		\end{bmatrix}.
	\end{align}
	Here, the parameters $\alpha_{in}$ and $g_{in}$ ($i = 1, 2, 3, 4$) are defined in \eqref{lem:sinlay} and \eqref{lem:npz}, while the parameters $\tilde{\alpha}_{in} $ and $\tilde{g}_{in}$  ($i = 1, 2, 3, 4$) are similarly defined in \eqref{lem:sinlay} and \eqref{lem:npz}, with $(\lambda, \mu)$ replaced by $(\tilde{\lambda}, \tilde{\mu})$.
	
	In the sub-wavelength regime, i.e., $\omega \ll 1$, by combining \eqref{eq:jn}, \eqref{eq:hn}, and \eqref{eq:axb}, the aforementioned equation \eqref{eq:AXb} are equivalently expressed as follows
	
	In the sub-wavelength regime, we perform an asymptotic expansion with respect to \(\omega\), combining the expansions of \(J_n\) and \(H_n\) defined in \eqref{eq:jn} and \eqref{eq:hn}. The aforementioned equation \eqref{eq:AXb} is equivalently expressed as follows:
	\begin{align}\label{eq:xtzh}				\left(\bm{A_0}+\omega^{2}\bm{A_{2}}+\omega^{4}\bm{A_{4}}+\mathcal{O}\left(\frac{\omega^{6}}{n^{6}}\right)\right)\bm{x}=\bm{b},
	\end{align}
	where
	\begin{align}
		\left.\bm{A_0}=\left[\begin{array}{cccc}
			\frac{-n  \tau ^2 (\lambda +3 \mu )}{4 \mu  \left(n^2-1\right) (\lambda +2 \mu )} & \frac{\mathrm{i}  \tau ^2 (\lambda +3 \mu )}{4 \mu  \left(n^2-1\right) (\lambda +2 \mu )} & \frac{n  (\lambda +3 \mu )}{4 \mu  \left(n^2-1\right) (\lambda +2 \mu )} & \frac{-\mathrm{i}  (\lambda +3 \mu )}{4 \mu  \left(n^2-1\right) (\lambda +2 \mu )} \\
			\frac{-\mathrm{i}  \tau ^2 (\lambda +3 \mu )}{4 \mu  \left(n^2-1\right) (\lambda +2 \mu )} & \frac{-n  \tau ^2 (\lambda +3 \mu )}{4 \mu  \left(n^2-1\right) (\lambda +2 \mu )} & \frac{\mathrm{i}  (\lambda +3 \mu )}{4 \mu  \left(n^2-1\right) (\lambda +2 \mu )} & \frac{n  (\lambda +3 \mu )}{4 \mu  \left(n^2-1\right) (\lambda +2 \mu )} \\
			-\frac{1}{2} & -\frac{\mathrm{i}}{2}  \left(1-\frac{\tau ^2 (\lambda +3 \mu )}{\lambda +2 \mu }\right) & -\frac{1}{2} & \frac{-\mathrm{i} \mu }{2 \lambda +4 \mu } \\
			\frac{-\mathrm{i} \mu  \tau ^2}{2 \lambda +4 \mu } & \frac{1}{2} \left(\tau ^2-2\right) & \frac{\mathrm{i} \mu }{2 \lambda +4 \mu } & -\frac{1}{2} \\
		\end{array}\right.\right].\notag
	\end{align}
	We noticed that 
	$det\left ( A_{0}  \right ) =\frac{ \left(\tau ^2+1\right) (\lambda +3 \mu )^2}{32 \mu ^2 \left(n^2-1\right) (\lambda +2 \mu )^2}\ne0, n\ge 2 $. Through direct calculations, one has 
	\begin{align}\notag
		\bm{A}_{0}^{-1} =\begin{bmatrix}
			b_{11} & b_{12} & \frac{-2}{\tau ^2+1} & \frac{\mathrm{i} \left(1-\tau ^2\right)}{\tau ^2+1} \\
			\frac{2 \mathrm{i} \mu  (\lambda +\mu  (n+2))}{R (\lambda +3 \mu )} & \frac{2 \mu  (\mu +n (\lambda +2 \mu ))}{- (\lambda +3 \mu )} & 0 & -1 \\
			b_{31} & b_{32} & \frac{2}{\tau ^2+1}-2 & \frac{-\mathrm{i} \tau ^2 \left(\tau ^2-1\right)}{\tau ^2+1} \\
			\frac{2 \mathrm{i} \mu  \left(-2 \lambda -4 \mu +\tau ^2 (\lambda +\mu  (n+2))\right)}{ (\lambda +3 \mu )} & \frac{2 \mu  \left(\mu  \tau ^2+n \left(\tau ^2-2\right) (\lambda +2 \mu )\right)}{-(\lambda +3 \mu )} & 0 & -\tau ^2 \\
		\end{bmatrix},
	\end{align}
	with
	\begin{align}
		b_{11}&=\frac{2 \mu  \left(\lambda -\left(\tau ^2 (\lambda +\mu  (n+2))\right)-2 \lambda  n-3 \mu  n\right)}{ \left(\tau ^2+1\right) (\lambda +3 \mu )},\notag\\
		b_{12}&=-\frac{2 \mathrm{i} \mu  \left(2 \lambda +3 \mu +\tau ^2 (\mu +n (\lambda +2 \mu ))-\lambda  n\right)}{ \left(\tau ^2+1\right) (\lambda +3 \mu )},\notag\\
		b_{31}&=\frac{2 \mu  \left(-\left(\tau ^4 (\lambda +\mu  (n+2))\right)+\tau ^2 (\lambda +\mu  n)+2 n (\lambda +2 \mu )\right)}{ \left(\tau ^2+1\right) (\lambda +3 \mu )},\notag\\
		b_{32}&=\frac{2 \mathrm{i} \mu  \left(2 \lambda +4 \mu -\tau ^4 (\mu +n (\lambda +2 \mu ))+\tau ^2 (\mu +\lambda  n)\right)}{ \left(\tau ^2+1\right) (\lambda +3 \mu )}.\notag
	\end{align}
	Thus, the equation \eqref{eq:xtzh} can be reformulated as 
	\begin{align}
		\mathbf{A_0}\left(\mathbf{I}+\omega^2\mathbf{A_0}^{-1}\mathbf{A_2}+\omega^4\mathbf{A_0}^{-1}\mathbf{A_4}+\mathcal{O}\left(\frac{\omega^6}{n^5}\right)\right)\bm{x}=\mathbf{b}.\notag
	\end{align}
	Here, since \(\omega \ll 1\), and the elements of the inverse matrix $\mathbf{A_0}^{-1}$ as well as those of matrix $\mathbf{A_2}$ are bounded, we can obtain that
	\[
	\left\|\omega^2\mathbf{A_0}^{-1}\mathbf{A_2}+\omega^4\mathbf{A_0}^{-1}\mathbf{A_4}+\mathcal{O}\left(\frac{\omega^6}{n^5}\right)\right\|_{\infty}<1.
	\]

	Consequently, by combining \eqref{eq:jn}, \eqref{eq:hn}, and \eqref{eq:dtgx}, we obtain the asymptotic expansion of the solution as:
	\begin{align}
		\bm{x}=&\left(\mathbf{I}+\omega^2\mathbf{A_0}^{-1}\mathbf{A_2}+\omega^4\mathbf{A_0}^{-1}\mathbf{A_4}+\mathcal{O}\left(\frac{\omega^6}{n^5}\right)\right)^{-1}\mathbf{A_{0}^{-1}b}\notag \\
		=& \mathbf{A_0^{-1}b}-\omega^2\left(\mathbf{A_0^{-1}A_2 A_0^{-1}b}\right)+\mathcal{O}\left(\frac{\omega^4}{n^2}\left(\mathbf{A_0^{-1}b}\right)\right) \notag\\
		=&\begin{pmatrix}t_{1} 
			\\t_{2} 
			\\t_{3} 
			\\t_{4} 	
		\end{pmatrix} -\omega^2\begin{pmatrix}t_{5}
			\\t_{6}
			\\t_{7}
			\\t_{8}		
		\end{pmatrix}+\mathcal{O}\left(\frac{\omega^{4}}{n^2}\begin{pmatrix}t_{1} 
			\\t_{2} 
			\\t_{3} 
			\\t_{4} 
		\end{pmatrix}\right),\notag
	\end{align}
	where
	\begin{align}
		t_{1}&=2 \kappa \mu  \left(\frac{ 2\left(\tau ^2-3\right) (\lambda +2 \mu )(n-1)\left(  k_s J_n'\left(k_s\right)+ n J_n\left(k_s\right)\right) }{\left(\tau ^2+1\right) (\lambda +3 \mu ) k_s} -\frac{ \left(\tau ^2-1\right)  k_s J_n\left(k_s\right)}{\left(\tau ^2+1\right)}\right),\notag\\
		t_{2}&=-2 \mathrm{i} \kappa\mu  \left(\frac{ 2 (n-1) (\lambda +2 \mu ) (k_s J_n'\left(k_s\right)+nJ_n\left(k_s\right)}{(\lambda +3 \mu ) k_s}- k_s J_n\left(k_s\right)\right),\notag\\
		t_{3}&=4 \kappa\mu  \frac{  (\lambda +2 \mu )  \left((n-1) \tau ^4+(1-3 n) \tau ^2-2\right) k_sJ_n'\left(k_s\right)}{\left(\tau ^2+1\right) (\lambda +3 \mu ) k_s}-2 \kappa\mu  \frac{ \tau ^2 \left(\tau ^2-1\right)  k_s J_n\left(k_s\right)}{\left(\tau ^2+1\right) }\notag\\
		&\quad +4 \kappa\mu  \frac{  n (\lambda +2 \mu ) \left((n-1) \tau ^4-(n-3) \tau ^2+2 n\right) J_n\left(k_s\right)}{\left(\tau ^2+1\right) (\lambda +3 \mu ) k_s},\notag\\
		t_{4}&=-4 \mathrm{i} \kappa \mu  \frac{  (\lambda +2 \mu ) (n-1) \tau ^2(k_sJ_n'\left(k_s\right)+nJ_n\left(k_s\right)) }{(\lambda +3 \mu ) k_s}+2 \mathrm{i} \kappa \mu  \tau ^2  k_s J_n\left(k_s\right)\notag\\
		&\quad-8 \mathrm{i} \kappa \mu  \frac{ n (\lambda +2 \mu ) (k_sJ_n'\left(k_s\right)-J_n\left(k_s\right))}{(\lambda +3 \mu ) k_s}.\notag
	\end{align}
	Accordingly, we derive the coefficients in \eqref{eq:phizh}, one can obtain 
	\begin{align}
		\varphi _{1,1,n}=&\frac{\kappa 2^{3-n} \left(\tau ^2-3\right) (\lambda +2 \mu ) \mu ^{\frac{3}{2}-\frac{n}{2}} \rho ^{\frac{n}{2}-\frac{1}{2}}}{\left(\tau ^2+1\right) (\lambda +3 \mu ) (n-2)!}\omega ^{n-1}+q_{1}\omega ^{n+1}+ \mathcal{O}(\omega^{n+3}),\label{eq:x12} \\
		\varphi _{1,2,n}=&-\frac{\mathrm{i} \kappa 2^{3-n} (\lambda +2 \mu ) \mu ^{\frac{3}{2}-\frac{n}{2}} \rho ^{\frac{n}{2}-\frac{1}{2}}}{(\lambda +3 \mu ) (n-2)!}\omega ^{n-1}+q_{2}\omega ^{n+1}+ \mathcal{O}(\omega^{n+3}),\notag
	\end{align}
	where
	\begin{align}\notag
		q_{1}=\mathcal{O}\left(\frac{\kappa 2^{1-n}  \mu ^{\frac{1}{2}-\frac{n}{2}} \rho ^{\frac{n}{2}+\frac{1}{2}}}{(n-1)!}\right),\quad q_{2}=\mathcal{O}\left(\frac{\mathrm{i} \kappa 2^{1-n}  \mu ^{\frac{1}{2}-\frac{n}{2}} \rho ^{\frac{n}{2}+\frac{1}{2}}}{(n-1)!}\right).
	\end{align}
	
	Based on \eqref{eq:sol}, by substituting the coefficients from \eqref{eq:x12} into formula \eqref{eq:phizh} and combining it with  \eqref{lem:qninut} in Lemma \ref{lem:sinlay}, we obtain
	\begin{align}
		\bu =&\tilde{S}_{\partial D }^{\omega }[\bm{\varphi} _{1} ] (\bx) 
		=\varphi _{1,1,n}\tilde{S}_{\partial D }^{\omega }[e^{\mathrm{i}n\theta }\bm{\upsilon}] (\bx)+\varphi _{1,2,n}\tilde{S}_{\partial D }^{\omega }[e^{\mathrm{i}n\theta }\bm{t}] (\bx) \label{eq:usw}\\
		=&\varphi _{1,1,n}\frac{-\mathrm{i}\pi}{4\omega^{2}\tilde{\rho} }\big(n\tilde{k}_{s}H_{n}(\tilde{k}_{s})\bm{\Psi}_{n}^{s,i}(\tilde{k}_{s}|\bx|)+\tilde{k}_{p}^{2}H_{n}^{\prime}\big(\tilde{k}_{p}\big)\bm{\Psi}_{n}^{p,i}(\tilde{k}_{p}|\mathbf{x}|)\big)\notag\\
		&+\varphi _{1,2,n}\frac{-\pi }{4\omega ^{2} \tilde{\rho}} \left ( n\tilde{k}_{p}H_{n}\left (\tilde{k}_{p} \right ) \bm{\Psi}_{n}^{p,i} \left (\tilde{k}_{p} \left | \bx \right |  \right ) +\tilde{k}_{s}^{2}  H_{n}'\left ( \tilde{k}_{s}  \right )  \bm{\Psi}_{n}^{s,i} \left (\tilde{k}_{s} \left | \bx \right |  \right ) \right ).\notag
	\end{align}
	Subsequently, by substituting  \eqref{lem:qni} from Lemma \ref{lem:sinlay} into \eqref{eq:usw}, applying the asymptotic expansions given in \eqref{eq:jn}, \eqref{eq:hn}, and \eqref{eq:dtgx}, and performing the necessary calculations, one obtains the following asymptotic expansion for $\bu$:
	\begin{align}\label{eq:uae}
		\bu
		=&\left(\left(\xi _1|\bx|^{n-1}+\xi_2|\bx|^{n+1}\right)e^{\mathrm{i}n\theta }\bm{\upsilon}+\left(\xi _3|\bx|^{n-1}+\xi_4|\bx|^{n+1}\right)e^{\mathrm{i}n\theta }\bm{t}\right)\omega ^{n-1}\left(1+\mathcal{O}\left(\omega^2\right)\right),
	\end{align}
	where
	\begin{align}
		\xi _1&=\frac{\kappa 2^{1-n} \tau ^2 \mu ^{\frac{1}{2}-\frac{n}{2}} \rho ^{\frac{n}{2}-\frac{1}{2}} \epsilon\left( (\lambda  (n+1)+\mu  (n+5))-\left(n-1\right) \tau ^2 (\lambda +\mu )\right)}{ \left(\tau ^2+1\right) (\lambda +3 \mu ) (n-1)!},\notag\\
		\xi _2&=\frac{\kappa 2^{1-n} \tau ^2 \mu ^{\frac{1}{2}-\frac{n}{2}} \rho ^{\frac{n}{2}-\frac{1}{2}} \epsilon  \left(\tau ^2-1\right)  (n (\lambda +\mu )-2 \mu )}{(n+1) \left(\tau ^2+1\right) (\lambda +3 \mu ) (n-2)!},\notag\\
		\xi _3&=\frac{\mathrm{i}\kappa 2^{1-n} \tau ^2 \mu ^{\frac{1}{2}-\frac{n}{2}} \rho ^{\frac{n}{2}-\frac{1}{2}}\epsilon \left( (\lambda  (n+1)+\mu  (n+5))-\left(n-1\right) \tau ^2 (\lambda +\mu )\right)}{ \left(\tau ^2+1\right) (\lambda +3 \mu ) (n-1)!},\notag\\
		\xi _4&=\frac{\mathrm{i} \kappa 2^{1-n} \tau ^2  \mu ^{\frac{1}{2}-\frac{n}{2}} \rho ^{\frac{n}{2}-\frac{1}{2}} \epsilon (n-1)\left(\tau ^2-1\right)(\lambda  (n+2)+\mu  (n+4))}{(n+1) \left(\tau ^2+1\right) (\lambda +3 \mu )  (n-1)!}.\notag
	\end{align}
	Furthermore, through rigorous computations, one can obtain
	\begin{align}
		\xi _1&+\xi _2+\xi _3+\xi _4=\frac{(1+\mathrm{i}) \kappa 2^{1-n} \tau ^2 \epsilon  \mu ^{\frac{1-n}{2}} \rho ^{\frac{n-1}{2}} \left(\mathrm{i} (n-1) \tau ^2+(2-\mathrm{i}) n+(2+\mathrm{i})\right)}{(n+1) \left(\tau ^2+1\right)  (n-1)!}.\notag
	\end{align}		
	Based on \eqref{eq:uae}, we can obtain 
	\begin{align}
		\|\bu\|_{L^2(D \setminus \mathcal{N}_{D,1-\gamma_{1}}^{-})^2}^2
		=&\left(\zeta _{1,n} \gamma_1 ^{2n}+\zeta _{2,n} \gamma_1 ^{2n+2}+\zeta _{3,n} \gamma_1 ^{2n+4}\right)\omega ^{2n-2}\left(1+\mathcal{O}(\omega^{2})\right)\notag\\
		=&\zeta _{1,n}\gamma_1 ^{2n}\left(1+\zeta _{4,n} \gamma_1 ^{2}+\zeta _{5,n} \gamma_1 ^{4}\right)\omega ^{2n-2}\left(1+\mathcal{O}(\omega^{2})\right),\label{eq:unorm}
	\end{align}
	where
	\begin{align}
		&\zeta _{1,n}=\frac{\pi  \kappa^2 2^{3-2 n} \tau ^4 \mu ^{1-n} \rho ^{n-1} \epsilon^{2}\left(\lambda +5 \mu -\left((n-1) \tau ^2 (\lambda +\mu )\right)+\lambda  n+\mu  n\right)^2}{n \left(\tau ^2+1\right)^2 (\lambda +3 \mu )^2  (n-1)!^2},\label{eq:zetau}\\
		&\zeta_{2,n}=\frac{\left(\lambda +5 \mu -(n-1) \tau ^2 (\lambda +\mu )+\lambda  n+\mu  n\right)}{(n+1) \left(\tau ^2+1\right)^2 (\lambda +3 \mu )^2(n-2)!(n-1)!}\notag\\
		&\quad\quad\times \pi  \kappa^2 4^{2-n} \tau ^4 \left(\tau ^2-1\right) \epsilon ^2 (\lambda +\mu ) \mu ^{1-n} \rho ^{n-1},\notag\\
		&\zeta_{3,n}=\frac{\pi  \kappa^2 4^{1-n} (n-1)^2 \tau ^4 \left(\tau ^2-1\right)^2 \epsilon ^2 \mu ^{1-n} \rho ^{n-1} }{(n+1)^2 (n+2) \left(\tau ^2+1\right)^2 (\lambda +3 \mu )^2  (n-1)!^2}((\lambda  (n+2)+\mu  (n+4))^2\notag\\
		&\quad\quad+(n (\lambda +\mu )-2 \mu )^2),\notag\\
		&\zeta _{4,n}=\frac{-2 (n-1) n \left(\tau ^2-1\right) (\lambda +\mu )}{\left(n^2-1\right) \tau ^2 (\lambda +\mu )-(n+1) (\lambda  (n+1)+\mu  (n+5))}=-2+\mathcal{O}\left(\frac{1}{n}\right),\notag\\
		&\zeta _{5,n}=\frac{\left(\lambda ^2 (n^2+2n+2)+2 \lambda  \mu  (n (n+2)+4)+\mu ^2 (n^2+2n+10)\right)}{(n+2) \left((n+1) (\lambda  (n+1)+\mu  (n+5))-\left(n^2-1\right) \tau ^2 (\lambda +\mu )\right)^2}\notag\\
		&\quad\quad\times (n-1)^2 n \left(\tau ^2-1\right)^2=1+\mathcal{O}\left(\frac{1}{n}\right),\notag
	\end{align}
	By substituting the values of \(\zeta _{4,n}\) and \(\zeta _{5,n}\), we can derive	
	\begin{align}\label{eq:zetar}
		1+\zeta _{4,n}+\zeta _{5,n}=\frac{\mathcal{R}_{1}}{n^2}+\mathcal{O}\left(\frac{1}{n^3}\right),
	\end{align}	
	where
	\begin{align}\label{eq:R1}
		\mathcal{R}_{1}=\frac{\lambda ^2 \left(3 \tau ^4-10 \tau ^2+11\right)+2 \lambda  \mu  \left(5 \tau ^4-18 \tau ^2+25\right)+\mu ^2 \left(11 \tau ^4-34 \tau ^2+59\right)}{\left(\tau ^2-1\right)^2 (\lambda +\mu )^2}.
	\end{align}
	Consequently, based on \eqref{eq:zetau} and \eqref{eq:zetar}, one can obtain 
	\begin{align}
		\frac{1+\zeta _{4,n} \gamma_1 ^{2}+\zeta _{5,n} \gamma_1 ^{4}}{1+\zeta _{4,n}+\zeta _{5,n}}&=\frac{1+\left(-2+\mathcal{O}\left(\frac{1}{n}\right)\right)\gamma_{1}^{2}+\left(1+\mathcal{O}\left(\frac{1}{n}\right)\right)\gamma_{1}^{4}}{\frac{\mathcal{R}_{1}}{n^2}+\mathcal{O}(\frac{1}{n^3})}\notag\\
		&=\frac{n^2}{\mathcal{R}_{1}}\left(1-2\gamma_{1}^2+\gamma_{1}^4+\mathcal{O}\left(\frac{1}{n}\right)\right)\cdot\left(1+\mathcal{O}\left(\frac{1}{n}\right)\right)\notag\\
		&=\frac{1-2\gamma_{1}^2+\gamma_{1}^4}{\mathcal{R}_{1}} n^2+\mathcal{O}\left(n\right)<2 \times \frac{1-2\gamma_{1}^2+\gamma_{1}^4}{\mathcal{R}_{1}} n^2,\notag
	\end{align}  
	where $K_{1} = \frac{2\left(1 - 2\gamma_{1}^2 + \gamma_{1}^4\right)}{\mathcal{R}_{1}}$ is defined in \eqref{eq:lemk1k2}. According to Lemma \ref{lem:thmxy}, we obtain
	\[\frac{1+\zeta _{4,n} \gamma_1 ^{2}+\zeta _{5,n} \gamma_1 ^{4}}{1+\zeta _{4,n}+\zeta _{5,n}}<K_{1}n^2\leq\gamma_1^{-n}.\]
	Consequently, the energy distribution of the total internal field on the boundary can be evaluated, and direct calculations show that the result holds when $n$ satisfies \eqref{eq:condition},
	\begin{align}
		\frac{\|\mathbf{u}\|_{L^2(D \setminus \mathcal{N}_{D,1-\gamma_{1}}^{-})^2}^2}{\|\mathbf{u}\|_{L^2(D)^2}^2}=
		&\frac{\zeta _{1,n}(1 +\zeta _{4,n} \gamma_1 ^{2}+\zeta _{5,n} \gamma_1 ^{4})\omega ^{2n-2}\gamma_1 ^{2n}\left(1+\mathcal{O}( \omega^{2})\right)}{\zeta _{1,n}(1 +\zeta _{4,n} +\zeta _{5,n} )\omega ^{2n-2}\left(1+\mathcal{O}(\omega^{2})\right)}\notag\\
		\leq&\gamma_1^{n}\left(1+\mathcal{O}\left(\omega^2\right)\right)\left(1-\mathcal{O}\left(\omega^2\right)\right)\leq\varepsilon+\mathcal{O}\left(\varepsilon\omega^2\right)\ll1.\notag
	\end{align}
	Then, we can conclude that the energy of the internal scattering field $\bu$ in the elastic scattering problem \eqref{eq:xtm} is predominantly localized on the inner surface of $D$.
	With a calculation similar to \eqref{eq:x12}, one can obtain the following result for the coefficients in \eqref{eq:phizh}
	\begin{align}\label{eq:phi212}
		\varphi _{2,1,n}=&\frac{\kappa2^{3-n} \left(\tau ^2-1\right)^2 (\lambda +2 \mu ) \mu ^{\frac{3}{2}-\frac{n}{2}} \rho ^{\frac{n}{2}-\frac{1}{2}}}{\left(\tau ^2+1\right) (\lambda +3 \mu ) (n-2)!}\omega ^{n-1}+q_{3}\omega ^{n+1}+ \mathcal{O}(\omega^{n+3}),\\
		\varphi _{2,2,n}=&-\frac{\mathrm{i} \kappa 2^{3-n} \left(\tau ^2-1\right) (\lambda +2 \mu ) \mu ^{\frac{3}{2}-\frac{n}{2}} \rho ^{\frac{n}{2}-\frac{1}{2}}}{(\lambda +3 \mu ) (n-2)!}\omega ^{n-1}+q_{4}\omega ^{n+1}+ \mathcal{O}(\omega^{n+3}), \notag
	\end{align}
	where
	\begin{align}
		q_{3}=\mathcal{O}\left(\frac{\kappa 2^{1-n}  \mu ^{\frac{1}{2}-\frac{n}{2}} \rho ^{\frac{n}{2}+\frac{1}{2}}}{(n-1)!}\right),\quad q_{4}=\mathcal{O}\left(\frac{\mathrm{i} \kappa 2^{1-n}  \mu ^{\frac{1}{2}-\frac{n}{2}} \rho ^{\frac{n}{2}+\frac{1}{2}}}{(n-1)!}\right).\notag
	\end{align}
	Based on \eqref{eq:sol}, by substituting the coefficients from \eqref{eq:phi212} into formula \eqref{eq:phizh} and incorporating  \eqref{lem:qninutw} from Lemma \ref{lem:sinlay}, we obtain 
	\begin{align}
		\bu^s =&S_{\partial D }^{\omega }[\bm{\varphi} _{2} ] (\bx) 
		=\varphi _{2,1,n}S_{\partial D }^{\omega }[e^{\mathrm{i}n\theta }\bm{\upsilon}] (\bx)+\varphi _{2,2,n}S_{\partial D }^{\omega }[e^{\mathrm{i}n\theta }\mathbf{t}] (\bx) \label{eq:us}\\
		=&\varphi _{2,1,n} \frac{-\mathrm{i}\pi}{4\omega^{2}\rho }\big(nk_{s}J_{n}(k_{s})\bm{\Psi}_{n}^{s,o}(k_{s}|\mathbf{x}|)+k_{p}^{2}J_{n}^{\prime}\big(k_{p}\big)\bm{\Psi}_{n}^{p,o}(k_{p}|\mathbf{x}|)\big) \notag\\
		&+\varphi _{2,2,n} \frac{-\pi}{4\omega^{2}\rho }\big(k_{s}^{2}J_{n}^{\prime}(k_{s})\bm{\Psi}_{n}^{s,o}(k_{s}|\mathbf{x}|)+nk_{p}J_{n}(k_{p})\bm{\Psi}_{n}^{p,o}(k_{p}|\mathbf{x}|)\big).\notag
	\end{align}
	Subsequently, by substituting \eqref{eq:qp0} from Lemma \ref{lem:sinlay} into \eqref{eq:us}, applying the asymptotic expansions in \eqref{eq:jn}, \eqref{eq:hn}, and \eqref{eq:dtgx}, and carrying out the necessary calculations, one can obtain that $\bu^s$ have the following asymptotic expansion: 
	\begin{align}
		\bu^{s}
		=&\left(\left(\xi _5|\bx|^{-n-1}+\xi_6|\bx|^{-n+1}\right)e^{\mathrm{i}n\theta }\bm{\upsilon}+\left(\xi _7|\bx|^{-n-1}+\xi_8|\bx|^{-n+1}\right)e^{\mathrm{i}n\theta }\bm{t}\right)\omega ^{n-1}\notag\\
		&\times \left(1+\mathcal{O}\left(\omega^2\right)\right),\notag
	\end{align}
	where
	\begin{align}
		\xi _5&=-\frac{\kappa 2^{1-n} \mu ^{\frac{1}{2}-\frac{n}{2}} \rho ^{\frac{n}{2}-\frac{1}{2}} (n-1) \left(\tau ^2-1\right)  \left(\tau ^2 (\lambda +3 \mu )+(n+1) (\lambda +\mu )\right)}{(n+1) \left(\tau ^2+1\right) (\lambda +3 \mu )  (n-1)!},\notag\\
		\xi _6&=\frac{\kappa 2^{1-n} \mu ^{\frac{1}{2}-\frac{n}{2}} \rho ^{\frac{n}{2}-\frac{1}{2}} \left(\tau ^2-1\right) (2 \mu +n (\lambda +\mu ))}{\left(\tau ^2+1\right) (\lambda +3 \mu )  (n-1)!},\notag\\
		\xi _7&=\frac{\mathrm{i} \kappa 2^{1-n}\mu ^{\frac{1}{2}-\frac{n}{2}} \rho ^{\frac{n}{2}-\frac{1}{2}} (n-1) \left(\tau ^2-1\right)  \left(\tau ^2 (\lambda +3 \mu )+(n+1) (\lambda +\mu )\right)}{(n+1) \left(\tau ^2+1\right) (\lambda +3 \mu ) (n-1)!},\notag\\
		\xi _8&=-\frac{\mathrm{i}  \kappa 2^{1-n}\mu ^{\frac{1}{2}-\frac{n}{2}} \rho ^{\frac{n}{2}-\frac{1}{2}} \left(\tau ^2-1\right)  (-2 (\lambda +2 \mu )+n (\lambda +\mu ))}{\left(\tau ^2+1\right) (\lambda +3 \mu ) (n-1)!}.\notag
	\end{align}
	Additionally, through rigorous computations, one has
	\begin{align}
		\xi _5&+\xi _6+\xi _7+\xi _8=\frac{(1+\mathrm{i} ) \kappa 2^{1-n} \left(\tau ^2-1\right) \mu ^{\frac{1}{2}-\frac{n}{2}} \rho ^{\frac{n}{2}-\frac{1}{2}} \left(\mathrm{i}  (n-1) \tau ^2+n+1\right)}{(n+1) \left(\tau ^2+1\right)(n-1)!}.\notag
	\end{align}
	Similar to \eqref{eq:unorm}, we can obtain
	\begin{align}
		&\left\|\bu^{s}\right\|_{L^{2}\left(\left(D_R \setminus \overline{D}\right) \setminus \mathcal{N}_{D,\gamma_{2}-1}^{+}\right)^{2}}^{2}\notag\\
		=&\left(\zeta _{6,n} (\gamma_2 ^{-2n}-R^{-2n})+\zeta _{7,n} (\gamma_2 ^{-2n+2}-R^{-2n+2})+\zeta _{8,n} \left( \gamma_2 ^{-2n+4}-R^{-2n+4}\right)\right)\notag\\
		& \times  \omega ^{2n-2}\left(1+ \mathcal{O}(\omega^{2})\right)\notag\\
		=&\zeta _{6,n}\frac{R^{2n}-\gamma_2 ^{2n}}{\gamma_2 ^{2n}R^{2n}}\left(1+\zeta _{9,n} \frac{R^{2n-2}-\gamma_{2}^{2n-2}}{R^{2n}-\gamma_2 ^{2n}}\gamma_2 ^{2}R^{2}+\zeta _{10,n} \frac{R^{2n-4}-\gamma_{2}^{2n-4}}{R^{2n}-\gamma_2 ^{2n}}\gamma_2 ^{4}R^{4}\right)\notag\\
		&\times \omega ^{2n-2}\left(1+ \mathcal{O}(\omega^{2})\right),\notag
	\end{align}
	where
	\begin{align}
		&\zeta _{6,n}=\frac{\pi\kappa^2 2^{3-2 n} \left(\tau ^2-1\right)^2 \mu ^{1-n} \rho ^{n-1} \left(\tau ^2 (\lambda +3 \mu )+(n+1) (\lambda +\mu )\right)^2}{n(n+1)^2 \left(\tau ^2+1\right)^2 (\lambda +3 \mu )^2  (n-2)!^2},\notag\\
		&\zeta_{7,n}=-\frac{\pi\kappa^2 4^{2-n} \left(\tau ^2-1\right)^2 (\lambda +\mu ) \mu ^{1-n} \rho ^{n-1} \left(\tau ^2 (\lambda +3 \mu )+(n+1) (\lambda +\mu )\right)}{(n-1)(n+1) \left(\tau ^2+1\right)^2 (\lambda +3 \mu )^2  (n-2)!^2},\notag\\
		&\zeta_{8,n}=\frac{\pi\kappa^2 4^{1-n} \left(\tau ^2-1\right)^2 \rho ^{n-1} \left((\lambda  (n-2)+\mu  (n-4))^2+(2 \mu +n (\lambda +\mu ))^2\right)}{\mu ^{n-1} (n-2)\left(\tau ^2+1\right)^2 (\lambda +3 \mu )^2 (n-1)!^2},\notag\\
		&\zeta _{9,n}=\frac{-2 n (n+1) (\lambda +\mu )}{\left(n^2-1\right) (\lambda +\mu )+(n-1) \tau ^2 (\lambda +3 \mu )}=-2+\frac{\zeta_{\frac{1}{n}}}{n}+\mathcal{O}\left(\frac{1}{n^2}\right),\notag\\
		&\zeta _{10,n}=\frac{n (n+1)^2 \left((\lambda  (n-2)+\mu  (n-4))^2+(2 \mu +n (\lambda +\mu ))^2\right)}{2 (n-2) (n-1)^2 \left(\tau ^2 (\lambda +3 \mu )+(n+1) (\lambda +\mu )\right)^2}\notag\\
		&\quad\quad=1-\frac{\zeta_{\frac{1}{n}}}{n}+\mathcal{O}\left(\frac{1}{n^2}\right),\notag\\
		&\zeta_{\frac{1}{n}}=\frac{2\tau ^2 (\lambda +3 \mu )-2\lambda -2\mu}{(\lambda +\mu )}.\notag
	\end{align}
	Through rigorous calculations, we can derive	
	\begin{align}\label{eq:zeta2}
		\zeta _{6,n}+\zeta _{7,n}+\zeta _{8,n}=\mathcal{O}\left(\frac{4^{1-n} \mu ^{1-n} \rho ^{n-1}}{n  (n-1)!^2}\right),\quad
		1+\zeta _{9,n}+\zeta _{10,n}=\frac{\mathcal{R}_{2}}{n^2}+\mathcal{O}\left(\frac{1}{n^3}\right),
	\end{align}
	where
	\begin{align}\label{eq:R2}
		\mathcal{R}_{2}=&\frac{3 \lambda ^2+\tau ^4 (\lambda +3 \mu )^2-2 \tau ^2 (\lambda +\mu ) (\lambda +3 \mu )+10 \lambda  \mu +11 \mu ^2}{(\lambda +\mu )^2}.
	\end{align}
	
	Finally, we show that the external scattered field $\bu^s$ is also concentrated, in terms of energy, on the exterior surface of the domain $D$,
	\begin{align}
		&\frac{\|\mathbf{u}^s\|_{L^2\left(\left(D_R \setminus \overline{D}\right) \setminus \mathcal{N}_{D,\gamma_{2}-1}^{+}\right)^2}^2}{\|\mathbf{u}^s\|_{L^2\left(D_{R}\setminus \overline{D}\right)^2}^2}\notag\\
		=&\Big\{\frac{1-2\gamma_{2}^2+\gamma_{2}^4}{\mathcal{R}_{2}}\times\frac{n^2}{\gamma_{2}^{2n}}+\mathcal{O}\left(\frac{n}{\gamma_{2}^{2n}}\right)\Big\} \left(1+\mathcal{O}\left(\omega^2\right)\right)\left(1-\mathcal{O}\left(\omega^2\right)\right)\notag\\
		\leq&K_{2}\times\frac{n^2}{\gamma_{2}^{2n}} \left(1+\mathcal{O}\left(\omega^2\right)\right)\left(1-\mathcal{O}\left(\omega^2\right)\right)\notag\\
		\leq&\gamma_2^{-n}\left(1+\mathcal{O}\left(\omega^2\right)\right)\left(1-\mathcal{O}\left(\omega^2\right)\right)\leq\varepsilon+\mathcal{O}\left(\varepsilon\omega^2\right)\ll1,\notag
	\end{align}
	where $K_{2} =\frac{2\left(1-2\gamma_{2}^2+\gamma_{2}^4\right)}{\mathcal{R}_{2}}$ is defined in \eqref{eq:lemk1k2}. According to Lemma \ref{lem:thmxy}, the inequality $K_{2}n^2 \leq \gamma_{2}^{n}$ holds.
	
	The proof is complete.
\end{proof}
\begin{rem}
	In this paper, to streamline our theoretical presentation, we specialize our analysis to the case where the material $D$ embedded in the soft elastic background is a unit disk. While we believe that our theoretical findings can be extended to high-contrast materials of more general geometries, a comprehensive investigation of such generalizations lies beyond the scope of the current study and will be addressed in future research. Furthermore, as indicated by Theorem \ref{thm:isl}, the scattered wave exhibits boundary localization in the vicinity of \(\partial D\). This implies that, with an appropriately chosen incident wave, the material \(D\) achieves near-cloaking under exterior measurements.
\end{rem}

Theorem \ref{thm:isl} establishes that for any given material parameter $\delta$ and  $\varepsilon$, the boundary localization can be realized in both the interior total field $\mathbf{u}|_D$ and exterior scattered field $\mathbf{u}^s|_{\mathbb{R}^2\setminus\overline{D}}$ by selecting the incident wave index $n$ according to condition~\eqref{eq:condition}. Moreover, Proposition \ref{cor:isl} demonstrates that for a given boundary localization level $\varepsilon$, by selecting an appropriate index $n$ of the incident wave $\mathbf{u}^i$, the boundary localization phenomenon for both the interior total field $\mathbf{u}|_D$ and the exterior scattered field $\mathbf{u}^s|_{\mathbb{R}^2 \setminus \overline{D}}$ can be realized through optimal adjustment of the material parameter $\delta$ with respect to $\varepsilon$, $\gamma_1$, and $\gamma_2$.
\begin{prop}\label{cor:isl}
	Under the same assumptions as  Theorem \ref{thm:isl}, for any fixed parameters $\gamma_1 \in (0,1)$ and $\gamma_2 \in (1,R)$, and a fixed boundary localization level $\varepsilon \ll 1$ , if the material parameter $\delta$ satisfies
	\begin{align}\label{eq:327}
		\delta \leq \delta_{0}=\min\left(\frac{\ln\gamma_{1}}{\ln\varepsilon},-\frac{\ln\gamma_{2}}{\ln\varepsilon}\right),
	\end{align}
	and if the index $n$ associated with the incident wave $\mathbf{u}^i$ defined in \eqref{eq:ui} satisfies
	\begin{align}\label{eq:n27}
		n\geq\max\left({n_{2},n_{4}, \frac{1}{\delta}}\right),
	\end{align}
	where $n_{2}$ and $n_{4}$ are given in \eqref{eq:n24}, then the interior total field $\mathbf{u}|_D$ and the exterior scattered field $\mathbf{u}^s|_{\mathbb{R}^2 \setminus \overline{D}}$ exhibits the boundary localization.
\end{prop}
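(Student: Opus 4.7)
The plan is to recognize Proposition~\ref{cor:isl} as a re-parametrization of Theorem~\ref{thm:isl}: instead of fixing the high-contrast parameters and driving $n$ large enough to achieve a prescribed localization level, one fixes the target level $\varepsilon$ and re-expresses the growth condition \eqref{eq:condition} as a coupled constraint between the material contrast $\delta$ and the harmonic index $n$. The proof therefore reduces to verifying that the hypotheses \eqref{eq:327}--\eqref{eq:n27} imply the full index bound $n\geq \max(n_1,n_2,n_3,n_4)$ required in Theorem~\ref{thm:isl}, after which the two localization estimates follow verbatim.

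To implement this, I would first invert the bound on $\delta$. Since $\varepsilon\ll 1$, $\gamma_1\in(0,1)$ and $\gamma_2\in(1,R)$ give $\ln\varepsilon<0$, $\ln\gamma_1<0$ and $\ln\gamma_2>0$, so $\delta_0>0$ and
\[
\frac{1}{\delta_0}=\max\!\left(\frac{\ln\varepsilon}{\ln\gamma_1},\,-\frac{\ln\varepsilon}{\ln\gamma_2}\right).
\]
Combining $\delta\leq\delta_0$ with the assumed $n\geq 1/\delta$ yields $n\geq 1/\delta_0$, hence both $n\geq \ln\varepsilon/\ln\gamma_1$ and $n\geq -\ln\varepsilon/\ln\gamma_2$. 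Since $n$ is a positive integer, this gives $n\geq n_1$ and $n\geq n_3$ with $n_1,n_3$ as in \eqref{eq:n13}. Adding the explicitly assumed $n\geq\max(n_2,n_4)$ recovers \eqref{eq:condition}.

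Once \eqref{eq:condition} is in hand, the remaining standing hypotheses carry over unchanged: the assumptions \eqref{eq:hicon}--\eqref{eq:de} and \eqref{eq:lambda=} are inherited by reference, and $\delta\leq\delta_0\ll 1$ (with $\tau<1$ forcing $\epsilon>\delta$, still $\ll 1$) keeps us in the high-contrast sub-wavelength regime of Theorem~\ref{thm:isl}. Applying the theorem then delivers the two quotients in \eqref{eq:def2.1} bounded by $\varepsilon+\mathcal{O}(\varepsilon\omega^2)\ll 1$, which is exactly the boundary localization of $\mathbf{u}|_D$ and $\mathbf{u}^s|_{\mathbb{R}^2\setminus\overline{D}}$ claimed in the proposition.

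The main technical subtlety I anticipate is the borderline case where $\ln\varepsilon/\ln\gamma_1$ or $-\ln\varepsilon/\ln\gamma_2$ happens to coincide with a positive integer: in that case the passage from the real inequality $n\geq 1/\delta_0$ to the integer-valued bound $n\geq n_1$ (respectively $n_3$) loses one unit relative to the $\lfloor\cdot\rfloor+1$ definition in \eqref{eq:n13}. This is handled either by reading $n\geq 1/\delta$ as a strict inequality or by tightening $\delta\leq \delta_0$ to the strict version $\delta<\delta_0$; both are consistent with the generic sub-wavelength high-contrast scaling and constitute the only non-mechanical point in the reduction to Theorem~\ref{thm:isl}.
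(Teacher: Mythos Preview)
Your proposal is correct and follows essentially the same route as the paper: verify that \eqref{eq:327}--\eqref{eq:n27} force $n\geq\max(n_1,n_2,n_3,n_4)$, then invoke Theorem~\ref{thm:isl}. Your discussion of the borderline integer case is more careful than the paper's own argument; note that it is ultimately harmless, since the operative inequality in the proof of Theorem~\ref{thm:isl} is $\gamma_1^n\leq\varepsilon$ (equivalently $n\geq\ln\varepsilon/\ln\gamma_1$), and the definition $n_1=\lfloor\cdot\rfloor+1$ is merely a convenient sufficient integer threshold.
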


\begin{proof}
	Recall that $n_1$ and $n_3$ are defined in \eqref{eq:n13}. When \eqref{eq:327} is satisfied, it can be verified that 
	\begin{align}\label{eq:326}
		\frac{1}{\delta} \geq \max\{n_1, n_3\}. 
	\end{align}
	Combining \eqref{eq:n27} and \eqref{eq:326}, we deduce that the index $n$ satisfies the condition \eqref{eq:condition} in Theorem \ref{thm:isl}. Consequently,  Proposition \ref{cor:isl} can be proved directly from  Theorem \ref{thm:isl}. 
\end{proof}

\section{Surface resonances and stress concentrations}
\label{sec:experiments}
In  Theorem \ref{thm:sre}, we establish that the internal total field $\mathbf{u}|_D$ and the external scattered field $\mathbf{u}^s|_{\mathbb{R}^2 \setminus \overline{D}}$ exhibit quasi-Minnaert resonance in the sense of Definition \ref{def:quasi minnaert}. The quasi-Minnaert resonance is characterized by two fundamental phenomena in the wave fields generated by the hard material:
boundary localization (Definition \ref{def:surface localized}) and surface resonance (Definition \ref{def:surface resonant}). These dual effects collectively characterize the distinctive wave interaction patterns arising from the elastic properties of the material within the sub-wavelength regime. Furthermore,  Theorem \ref{thm:eu} demonstrates that the fields $\mathbf{u}|_D$ and $\mathbf{u}^s|_{\mathbb{R}^2 \setminus \overline{D}}$ associated with the scattering problem \eqref{eq:xtm}, where  $\mathbf u^i$ is given by \eqref{eq:ui} and  \( D \) is a unit disk, generating strong stress concentrations near $\partial D$.
\begin{thm}\label{thm:sre}
	Consider the elastic scattering problem \eqref{eq:xtm}. Let $(D;\tilde{\lambda},\tilde{\mu},\tilde{\rho})$ represent the unit disk embedded within a homogeneous elastic medium $(\mathbb{R}^2\setminus\overline{D};\lambda,\mu,\rho)$ in $\mathbb{R}^2$. Recall that $\mathcal{N}_{D,1-\gamma_{1}}^{-}$ and $\mathcal{N}_{D,\gamma_{2}-1}^{+}$ are defined in  \eqref{eq:Mdef}. Under the assumptions \eqref{eq:hicon}-\eqref{eq:de}, \eqref{eq:lambda=} and the parameters $\varepsilon \ll 1,$ if the incident wave index $n$ satisfies 
	\begin{align}\notag
		n\geq \max{\left(n_1,n_2,n_3,n_4, 1/\delta^2\right)},
	\end{align}
	then the corresponding total field ${\bf u}$ in $D$ and the scattered wave field $\mathbf{u}^s$ in $\mathbb R^2 \backslash \overline{D} $ satisfy
	\begin{align}\label{eq:grath1}
		\frac{\|\nabla\mathbf{u}\|_{L^2(\mathcal{N}_{D,1-\gamma_{1}}^{-})^2}}{\|{\bu}^i\|_{L^2(D)^2}}=\mathcal{O}\left(n\delta\right)\gg1,\quad
		\frac{\|\nabla\bu^s\|_{L^2(\mathcal{N}_{D,\gamma_{2}-1}^{+})^2}}{\|\bu^i\|_{L^2(D)^2}}=\mathcal{O}\left({n}\right)\gg1.
	\end{align}
	Namely, the quasi-Minnaert resonance occurs.
\end{thm}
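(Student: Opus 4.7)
The plan is to reuse the asymptotic expansions of the interior total field $\mathbf{u}$ given in \eqref{eq:uae} and of the exterior scattered field $\mathbf{u}^s$ derived in the proof of Theorem \ref{thm:isl}, differentiate them in polar coordinates, and compare the resulting $L^2$ norms of the gradient near $\partial D$ against $\|\mathbf{u}^i\|_{L^2(D)^2}$. The latter is computed directly from $\mathbf{u}^i=\kappa\bm{\Psi}_n^{s,i}$ using the small-argument expansion \eqref{eq:jn} of $J_n$ and $J_n'$: one checks that $|\bm{\Psi}_n^{s,i}(k_s r)|^2\sim 2(k_s r)^{2n-2}/(2^{n-1}(n-1)!)^2$, whence $\|\mathbf{u}^i\|_{L^2(D)^2}^2\sim \pi\kappa^2 k_s^{2n-2}/[n\cdot 4^{n-1}((n-1)!)^2]$ at leading order, which is precisely the prefactor scale of $\zeta_{1,n}$ appearing in the interior energy norm but \emph{without} the $\tau^2\epsilon=\delta$ factor.

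For the interior, I write $\mathbf{u}=u_\upsilon\bm{\upsilon}+u_t\bm{t}$ with $u_\upsilon=(\xi_1 r^{n-1}+\xi_2 r^{n+1})e^{\mathrm{i}n\theta}$ and $u_t=(\xi_3 r^{n-1}+\xi_4 r^{n+1})e^{\mathrm{i}n\theta}$, and compute the four components of $\nabla\mathbf{u}$ in polar coordinates, carefully including the Christoffel-type corrections $\tfrac1r(\partial_\theta u_\upsilon-u_t)$ and $\tfrac1r(\partial_\theta u_t+u_\upsilon)$ that arise because $(\bm{\upsilon},\bm{t})$ rotates with $\theta$. Each entry is a linear combination of $(n\xi_i)r^{n-2}e^{\mathrm{i}n\theta}$ and $(n\xi_j)r^n e^{\mathrm{i}n\theta}$, so that $|\nabla\mathbf{u}|^2$ is of order $n^2|\xi_1|^2 r^{2n-4}$ near $r=1$. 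Integrating over $\mathcal{N}_{D,1-\gamma_1}^-=\{\gamma_1<r<1\}$ against the area element via $\int_{\gamma_1}^1 r^{2n-3}\,dr\sim 1/(2n-2)$ yields $\|\nabla\mathbf{u}\|_{L^2(\mathcal{N}_{D,1-\gamma_1}^-)^2}^2\asymp n\,|\xi_1|^2\omega^{2n-2}$. Substituting the explicit form of $\xi_1$ (which carries the factor $\tau^2\epsilon=\delta$ and an $\mathcal{O}(n)$ algebraic bracket) and dividing by $\|\mathbf{u}^i\|_{L^2(D)^2}^2$, the $4^n$, $((n-1)!)^2$, $\omega^{2n-2}$ and $(\rho/\mu)^{n-1}$ prefactors cancel exactly, leaving the ratio of order $n\delta$. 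Under the hypothesis $n\ge 1/\delta^2$ this is at least $1/\delta\gg 1$. The exterior argument is entirely parallel: one uses the expansion of $\mathbf{u}^s$ from the proof of Theorem \ref{thm:isl} with coefficients $\xi_5,\ldots,\xi_8$ and radial decay $r^{-n\pm 1}$, integrates over $1<r<\gamma_2$ via $\int_1^{\gamma_2} r^{-(2n+3)}\,dr\sim 1/(2n+2)$, and observes the crucial structural difference that $\xi_5,\ldots,\xi_8$ carry the factor $\tau^2-1\sim -1$ in place of $\tau^2\epsilon=\delta$; this removes one power of $\delta$ and produces a ratio of order $n$.

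The main obstacle is to extract the \emph{correct} power of $n$. Estimating naively from $|\xi_i|=\mathcal{O}(n\delta)$ and $|\nabla\mathbf{u}|\lesssim (n/r)|\mathbf{u}|$ would inflate the scaling to $n^2\delta$ (resp.\ $n^2$), so the sharpness of \eqref{eq:grath1} rests on three interlocking cancellations among the coefficients: the identity $\xi_3=\mathrm{i}\xi_1$ holds exactly, while $\xi_4\sim -\mathrm{i}\xi_1$ and $\xi_2\sim -\xi_1/n$ hold at leading order. These force the tangential part to take the form $u_t\sim \mathrm{i}\xi_1 r^{n-1}(1-r^2)e^{\mathrm{i}n\theta}$, which is \emph{small} near $r=1$, and they produce a further near-cancellation $\mathrm{i}n\xi_2-\xi_4=o(\xi_1)$ inside one of the covariant gradient entries. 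Tracking these cancellations — mirroring the way the proof of Theorem \ref{thm:isl} extracts $1+\zeta_{4,n}+\zeta_{5,n}=\mathcal{R}_1/n^2+\mathcal{O}(1/n^3)$ from apparently larger summands — is what delivers the advertised $\mathcal{O}(n\delta)$ and $\mathcal{O}(n)$ rates. Once the two inequalities in \eqref{eq:grath1} are established, the boundary-localization bounds already provided by Theorem \ref{thm:isl} combine with them to give quasi-Minnaert resonance in the sense of Definition \ref{def:quasi minnaert}.
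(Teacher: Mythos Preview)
Your overall strategy coincides with the paper's: compute $\nabla\mathbf{u}$ and $\nabla\mathbf{u}^s$ in polar coordinates from the expansions obtained in Theorem~\ref{thm:isl}, integrate over the annular neighborhoods, and divide by $\|\mathbf{u}^i\|_{L^2(D)^2}^2$. The paper writes $\nabla\mathbf{u}=A_{11}\hat r\otimes\hat r+A_{12}\hat r\otimes\hat\theta+A_{22}\hat\theta\otimes\hat\theta$ (with $A_{21}=0$), integrates $|A_{11}|^2+|A_{12}|^2+|A_{22}|^2$ to obtain $\mathcal A_{4,n}(1-\gamma_1^{2n-2})+\mathcal A_{5,n}(1-\gamma_1^{2n})+\mathcal A_{6,n}(1-\gamma_1^{2n+2})$, and then records the two-orders-of-$n$ cancellation
\[
\mathcal A_{4,n}+\mathcal A_{5,n}+\mathcal A_{6,n}=\mathcal O\!\left(\frac{\kappa^2\,2^{3-2n}\tau^4\epsilon^2\mu^{1-n}\rho^{n-1}}{(n-1)!\,(n-2)!}\right),
\]
exactly parallel to the $1+\zeta_{4,n}+\zeta_{5,n}=\mathcal R_1/n^2+\mathcal O(1/n^3)$ reduction you invoke. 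The exterior part is handled the same way with coefficients $\mathcal B_{4,n},\mathcal B_{5,n},\mathcal B_{6,n}$.

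The gap is in your explicit mechanism for the cancellation. Your intermediate claim $\|\nabla\mathbf u\|^2\asymp n|\xi_1|^2\omega^{2n-2}$ is the \emph{naive} count (leading to squared ratio $\sim n^4\delta^2$, i.e.\ $n^2\delta$, as you yourself note in the last paragraph), so the middle paragraph cannot already conclude $n\delta$. More seriously, the specific relations you propose to repair this are incorrect: from the formulas after \eqref{eq:uae} one has $\xi_2\sim -\xi_1$ at leading order in $n$ (not $-\xi_1/n$), and hence $\mathrm in\xi_2-\xi_4\sim -\mathrm i(n-1)\xi_1=\mathcal O(n\xi_1)$, not $o(\xi_1)$. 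What actually happens is that \emph{both} components satisfy $u_\upsilon\sim\xi_1 r^{n-1}(1-r^2)e^{\mathrm in\theta}$ and $u_t\sim\mathrm i\xi_1 r^{n-1}(1-r^2)e^{\mathrm in\theta}$ to leading order; the radial derivative at $r=1$ then drops from $\mathcal O(n\xi_1)$ to $\mathcal O(\xi_1)$, and the integrated Frobenius norm picks up the required $1/n^2$ savings, yielding $\|\nabla\mathbf u\|^2\asymp|\xi_1|^2\omega^{2n-2}/n$. Once you correct the $\xi_2$ relation (or, as the paper does, simply verify the cancellation in $\mathcal A_{4,n}+\mathcal A_{5,n}+\mathcal A_{6,n}$ directly), the argument goes through and the $\mathcal O(n\delta)$ and $\mathcal O(n)$ rates follow.
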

\begin{proof}
	First, we prove the first equality in \eqref{eq:grath1}. To simplify the discussion in this section, we denote $r=|\bx|$, and let  $\hat{r}$, $\hat{\theta}$ replace $\bm{\upsilon}$, $\bm{t}$. This is followed by the introduction of the following additional terms:
	\begin{align}
		a_{11} &=\varphi _{1,1,n}\frac{-2\mathrm{i} \pi   n^2 \tilde{k}_{s}  H_n\left(\tilde{k}_{s}  \right)}{4 \omega ^2\tilde{\rho}},\qquad 
		&a_{12} &=\varphi  _{1,2,n}\frac{-2\pi   n \tilde{k}_{s} ^2  H_n'\left(\tilde{k}_{s} \right)}{4 \omega ^2\tilde{\rho}},\label{eq:ure}\\
		a_{13}&=\varphi  _{1,1,n}\frac{-2\mathrm{i} \pi   \tilde{k}_{p} ^2  H_n'\left(\tilde{k}_{p} \right)}{4 \omega ^2\tilde{\rho}},\qquad 
		&a_{14}&=\varphi  _{1,2,n}\frac{-2\pi   n \tilde{k}_{p}   H_n\left(\tilde{k}_{p} \right)}{4 \omega ^2\tilde{\rho}} ,\notag \\
		a_{21}&=\varphi  _{1,1,n}\frac{2 \pi   n \tilde{k}_{s}  H_n\left(\tilde{k}_{s}  \right)}{4 \omega ^2\tilde{\rho}},\qquad
		&a_{22}&=\varphi  _{1,2,n}\frac{-2\mathrm{i}\pi    \tilde{k}_{s} ^2  H_n'\left(\tilde{k}_{s} \right)}{4 \omega ^2\tilde{\rho}},\notag\\
		a_{23}&=\varphi  _{1,1,n}\frac{2n \pi   \tilde{k}_{p} ^2  H_n'\left(\tilde{k}_{p} \right)}{4 \omega ^2\tilde{\rho}},\qquad
		&a_{24}&=\varphi  _{1,2,n}\frac{-2\mathrm{i}\pi   n^2 \tilde{k}_{p}   H_n\left(\tilde{k}_{p} \right)}{4 \omega ^2\tilde{\rho}}.\notag
	\end{align}
	
	Accordingly, when $n$ is sufficiently large, $\bu$ can be expressed as \eqref{eq:usw}. Moreover, by utilizing the notation introduced in \eqref{eq:ure}, it can be expressed equivalently as
	\begin{align}
		\bu =&a_{11}\frac{J_{n}(\tilde{k}_{s}r) }{\tilde{k}_{s}r} e^{\mathrm{i}n\theta }\hat{r} +a_{12}\frac{J_{n}(\tilde{k}_{s}r) }{\tilde{k}_{s}r} e^{\mathrm{i}n\theta }\hat{r}+a_{13}J_{n}'(\tilde{k}_{p}r)e^{\mathrm{i}n\theta }\hat{r}   +a_{14}J_{n}'(\tilde{k_{p}}r)e^{\mathrm{i}n\theta }\hat{r} \notag \\
		&+a_{21}J_{n}'(\tilde{k}_{s}r)e^{\mathrm{i}n\theta }\hat{\theta}+a_{22}J_{n}'(\tilde{k_{s}}r)e^{\mathrm{i}n\theta }\hat{\theta}+a_{23}\frac{J_{n}(\tilde{k_{p}}r) }{\tilde{k_{p}}r} e^{\mathrm{i}n\theta }\hat{\theta}+a_{24}\frac{J_{n}(\tilde{k_{p}}r) }{\tilde{k_{p}}r} e^{\mathrm{i}n\theta }\hat{\theta}.\label{eq:uret}
	\end{align}
			Through complex and tedious calculations, we can obtain
			\begin{align}
				\nabla \bu=&(a_{11}+a_{12})\nabla \left ( \frac{J_{n} (\tilde{k}_{s}  r )}{\tilde{k}_{s}r} e^{\mathrm{i}n\theta } \hat{r}  \right )+(a_{13}+a_{14}) \nabla \left(J_{n}' (\tilde{k}_{p}  r ) e^{\mathrm{i}n\theta } \hat{r} \right)+(a_{21}+a_{22}) \notag\\
				&\times\nabla \left(J_{n}' (\tilde{k}_{s}  r ) e^{\mathrm{i}n\theta } \hat{\theta}\right)+(a_{23}+a_{24})\nabla \left ( \frac{J_{n} (\tilde{k}_{p}  r )}{\tilde{k}_{p}r }e^{\mathrm{i}n\theta } \hat{\theta}  \right )\notag\\
				=&A_{11}\hat{r}\otimes \hat{r} +A_{12}\hat{r}\otimes \hat{\theta }+A_{22}\hat{\theta }\otimes \hat{\theta }+A_{21}\hat{\theta }\otimes \hat{r },\label{eq:grau}
			\end{align}
			where
			\begin{align}
				A_{11}=&\frac{\kappa 2^{1-n} \tau ^2 \epsilon  \mu ^{\frac{1}{2}-\frac{n}{2}} \rho ^{\frac{n}{2}-\frac{1}{2}} r^{n-2} e^{\mathrm{i}n \theta  } }{\left(\tau ^2+1\right) (\lambda +3 \mu )  (n-2)!}(\lambda  \left(n \left(r^2-1\right) \left(\tau ^2-1\right)+\tau ^2+1\right),\notag\\
				A_{12}=&\frac{\mathrm{i} \kappa 2^{2-n} \tau ^2 \epsilon  \mu ^{\frac{1}{2}-\frac{n}{2}} \rho ^{\frac{n}{2}-\frac{1}{2}} r^{n-2} e^{\mathrm{i} n\theta  } }{\left(\tau ^2+1\right) (\lambda +3 \mu )  (n-2)!}(\lambda  \left(n \left(r^2-1\right) \left(\tau ^2-1\right)+\tau ^2+1\right)\notag\\
				&+\mu  \left(n \left(r^2-1\right) \left(\tau ^2-1\right)+\tau ^2+5\right))\omega^{n-1}\left(1+\mathcal{O}\left(\omega^2\right)\right),\notag\\
				A_{21}=&0,\notag\\
				A_{22}=&-\frac{\kappa 2^{1-n} \tau ^2 \epsilon  \mu ^{\frac{1}{2}-\frac{n}{2}} \rho ^{\frac{n}{2}-\frac{1}{2}} r^{n-2} e^{\mathrm{i}n \theta  } }{\left(\tau ^2+1\right) (\lambda +3 \mu )  (n-2)!}(\lambda  \left(n \left(r^2-1\right) \left(\tau ^2-1\right)+\tau ^2+1\right)\notag\\
				&+\mu  \left(n \left(r^2-1\right) \left(\tau ^2-1\right)+2 r^2 \left(\tau ^2-1\right)+\tau ^2+5\right))\omega^{n-1}\left(1+\mathcal{O}\left(\omega^2\right)\right).\label{eq:graa}
			\end{align}
			Based on \eqref{eq:grau}, through rigorous computations, we can obtain
			\begin{align}
				&\|\nabla\bu\|_{L^{2}(\mathcal{N}_{D,1-\gamma_{1}}^{-})^2 }^{2} \label{eq:graufs}\\
				=&\left(\mathcal{A}_{4,n}\left(1-\gamma _{1}^{2n-2}\right)+\mathcal{A}_{5,n}\left(1-\gamma _{1}^{2n}\right)+\mathcal{A}_{6,n}\left(1-\gamma _{1}^{2n+2}\right)\right)\omega^{2n-2}\left(1+\mathcal{O}\left(\omega^2\right)\right)\notag\\
				=&\mathcal{A}\omega^{2n-2}\left(1+\mathcal{O}\left(\omega^2\right)\right),\notag		
			\end{align}
			where
			\begin{align}
				\mathcal{A}_{4,n}&=\frac{6\pi\kappa^2 4^{1-n} \tau ^4 \epsilon ^2 \mu ^{1-n} \rho ^{n-1}  \left(\lambda +5 \mu -(n-1) \tau ^2 (\lambda +\mu )+\lambda  n+\mu  n\right)^2}{(n-1)\left(\tau ^2+1\right)^2 (\lambda +3 \mu )^2  (n-2)!^2},\notag\\
				\mathcal{A}_{5,n}&=\frac{12\pi\kappa^2  \tau ^4 \epsilon ^2    n \left(\tau ^2-1\right) (\lambda +\mu ) \left(\lambda +5 \mu -(n-1) \tau ^2 (\lambda +\mu )+\lambda  n+\mu  n\right)}{4^{n-1}\rho ^{1-n}\mu ^{n-1}\left(\tau ^2+1\right)^2 (\lambda +3 \mu )^2 n (n-2)!^2},\notag\\
				\mathcal{A}_{6,n}&=\frac{2\pi\kappa^2  \tau ^4 \epsilon ^2  \rho ^{n-1}  \left(\tau ^2-1\right)^2 \left(4 \mu ^2+3 n^2 (\lambda +\mu )^2\right)}{4^{n-1}\mu ^{n-1}\left(\tau ^2+1\right)^2 (\lambda +3 \mu )^2 (n+1) (n-2)!^2}.\notag
			\end{align}
			Additionally, through rigorous computations, one has
			\begin{align}
				\mathcal{A}_{4,n}+\mathcal{A}_{5,n}+\mathcal{A}_{6,n}=\mathcal{O}\left(\frac{\kappa^2 2^{3-2 n} \tau ^4 \epsilon ^2 \mu ^{1-n} \rho ^{n-1} }{(n-1)!   (n-2)!}\right).\notag
			\end{align}

			Similarly, we can calculate $\|\bu^i\|_{L^2(D)^2}^2$, where $\bu^i$ is defined in \eqref{eq:ui}. After performing the calculations, we obtain the asymptotic form of $\bu^i$,
			\begin{align}
				\bu^i=&\frac{2\kappa nJ_{n}(k_{s}r)}{k_{s}r}e^{\mathrm{i}n\theta}\hat{r}+2\mathrm{i}\kappa J_{n}^{\prime}(k_{s}r)e^{\mathrm{i}n\theta}\hat{\theta}\notag\\
				=&\frac{\kappa     \rho ^{\frac{n}{2}-\frac{1}{2}} r^{n-1} \omega ^{n-1}}{2^{n-1}\mu ^{\frac{n}{2}-\frac{1}{2}}(n-1)!}\left(1+\mathcal{O}(\omega^2)\right)e^{\mathrm{i}n\theta}\hat{r}+\frac{i \kappa    \rho ^{\frac{n}{2}-\frac{1}{2}} r^{n-1} \omega ^{n-1}}{2^{n-1}\mu ^{\frac{n}{2}-\frac{1}{2}}(n-1)!}\left(1+\mathcal{O}(\omega^2)\right)e^{\mathrm{i}n\theta}\hat{\theta}.\notag
			\end{align}
			Furthermore, we can obtain
			\begin{align}
				\|\bu^{i}\|_{L^{2}(D)^2 }^{2}
				&=  \frac{\pi\kappa ^2 2^{3-2 n} \mu ^{1-n} \rho ^{n-1} }{n!(n-1)!}\omega^{2n-2}(1+\mathcal{O}(\omega ^2 )).\label{eq:graufsiu}
			\end{align}
			Combining \eqref{eq:graufs} and \eqref{eq:graufsiu}, one can obtain 
			\begin{align}\notag
				\frac{\|\nabla\mathbf{u}\|_{L^2(\mathcal{N}_{D,1-\gamma_{1}}^{-})^2}^2}{\|{\bu}^i\|_{L^2(D)^2}^2}=\frac{\mathcal{A}\omega^{2n-2}\left(1+\mathcal{O}\left(\omega^2\right)\right)}{ \frac{\pi\kappa ^2 2^{3-2 n} \mu ^{1-n} \rho ^{n-1} }{n!(n-1)!}\omega^{2n-2}(1+\mathcal{O}(\omega ^2 ))}=\mathcal{O}\left(n^2\delta^2\right).
			\end{align}  
			
			Under assumptions \eqref{eq:hicon}-\eqref{eq:de} and \eqref{eq:lambda=}, the  parameter $\delta$ satisfies $\delta\ll1$. Thus, for $n \geq\delta^{-2}$, we can obtain
			\begin{align}\label{eq:surface1} 
				\frac{\|\nabla\mathbf{u}\|_{L^2(\mathcal{N}_{D,1-\gamma_{1}}^{-})^2}}{\|{\bu}^i\|_{L^2(D)^2}}=\mathcal{O}\left(n\delta\right)\gg1.  
			\end{align}  
			From \eqref{eq:surface1}, we establish that the total field $\mathbf{u}$ exhibits surface resonance.  Theorem \ref{thm:isl} demonstrates that for parameters satisfying $\varepsilon \ll 1$ and $n \geq \max\left(n_1, n_2, n_3, n_4\right)$, the total field $\mathbf{u}$ manifests boundary localization. Consequently, these results reveal that the total field $\mathbf{u}$ exhibits quasi-Minnaert resonance.
			
			Next, we establish the second equality in \eqref{eq:grath1}. Combining \eqref{eq:us} and \eqref{eq:qp0}, we can rewrite the scattered field in the following form
				\begin{align}
					\bu^s \notag
					=&\left(a_{31} +a_{32}\right)\frac{H_{n}(k_{s}|\bx|) }{k_{s}|\bx|} e^{\mathrm{i}n\theta }\hat{r}+\left(a_{33} +a_{34}\right)H_{n}'(k_{p}|\bx|)e^{\mathrm{i}n\theta }\hat{r} \notag\\
					&+\left(a_{41}+a_{42}\right)H_{n}'(k_{s}|\bx|)e^{\mathrm{i}n\theta }\hat{\theta}+\left(a_{43}+a_{44}\right)\frac{H_{n}(k_{p}|\bx|) }{k_{p}|\bx|} e^{\mathrm{i}n\theta }\hat{\theta},\notag
				\end{align}
				where
				\begin{align}\label{eq:usxs}
					a_{31} &=\varphi _{2,1,n}\frac{-2\mathrm{i} \pi   n^2 k_{s}  J_n\left(k_{s}  \right)}{4 \omega ^2\rho}, \qquad 
					&a_{32} &=\varphi _{2,2,n}\frac{-2\pi   n k_{s} ^2  J_n'\left(k_{s} \right)}{4 \omega ^2\rho},\\
					a_{33}&=\varphi _{2,1,n}\frac{-2\mathrm{i} \pi   k_{p}^2  j_n'\left(k_{p} \right)}{4 \omega ^2\rho},\qquad 
					&a_{34}&=\varphi _{2,2,n}\frac{-2\pi   n k_{p}   J_n\left(k_{p}\right)}{4 \omega ^2\rho}, \notag \\
					a_{41}&=\varphi _{2,1,n}\frac{2 \pi   n k_{s}  J_n\left(k_{s} \right)}{4 \omega ^2\rho},\qquad
					&a_{42}&=\varphi _{2,2,n}\frac{-2\mathrm{i}\pi    k_{s} ^2  J_n'\left(k_{s} \right)}{4 \omega ^2\rho},\notag\\
					a_{43}&=\varphi _{2,1,n}\frac{2n \pi   k_{p} ^2  J_n'\left(k_{p} \right)}{4 \omega ^2\rho},\qquad
					&a_{44}&=\varphi _{2,2,n}\frac{-2\mathrm{i}\pi   n^2 k_{p}   J_n\left(k_{p} \right)}{4 \omega ^2\rho}.\notag
				\end{align}
				Through complex and tedious calculations, we can obtain
				\begin{align}
					\nabla \bu^{s}=
					&(a_{31}+a_{32})\nabla \left ( \frac{H_{n} (k_{s} r )}{{k_{s}}r} e^{\mathrm{i}n\theta } \hat{r} \right )+(a_{33}+a_{34}) \nabla \left(H_{n}' ({k_{p} } r ) e^{\mathrm{i}n\theta } \hat{r}\right)+(a_{41}+a_{42})\notag \\
					&\nabla \left(H_{n}' ({k_{s} } r ) e^{\mathrm{i}n\theta } \hat{\theta }\right)+(a_{43}+a_{44})\nabla \left ( \frac{H_{n} ({k_{p} } r )}{{k_{p}}r} e^{\mathrm{i}n\theta }\hat{\theta }  \right )\label{eq:graus}\\
					=&A_{31}\hat{r}\otimes \hat{r} +A_{32}\hat{r}\otimes \hat{\theta }+A_{42}\hat{\theta }\otimes \hat{\theta }+A_{41}\hat{\theta }\otimes \hat{r },\notag
				\end{align}
				where
				\begin{align}\label{eq:graxs}
					A_{31}=&\frac{\kappa 2^{1-n} \left(\tau ^2-1\right) \mu ^{\frac{1}{2}-\frac{n}{2}} \rho ^{\frac{n}{2}-\frac{1}{2}} r^{-n-2} e^{\mathrm{i} n\theta  } }{\left(\tau ^2+1\right) (\lambda +3 \mu )  (n-2)!}\Big\{\lambda  \left(-n r^2+n+\tau ^2+1\right)\\
					&+\mu  \left(-n r^2+n-2 r^2+3 \tau ^2+1\right)\Big\}\omega^{n-1}\left(1+\mathcal{O}\left(\omega^2\right)\right),\notag\\
					A_{32}=&-\frac{\mathrm{i} \kappa 2^{2-n} \left(\tau ^2-1\right) \mu ^{\frac{1}{2}-\frac{n}{2}} \rho ^{\frac{n-1}{2}} r^{-n-2} e^{\mathrm{i} n\theta  } }{\left(\tau ^2+1\right) (\lambda +3 \mu )  (n-2)!}\Big\{\tau ^2 (\lambda +3 \mu )\notag\\
					&-(\lambda +\mu ) \left(n \left(r^2-1\right)-1\right)\Big\}\omega^{n-1}\left(1+\mathcal{O}\left(\omega^2\right)\right),\notag\\
					A_{42}=&-\frac{\kappa 2^{1-n} \left(\tau ^2-1\right) \mu ^{\frac{1}{2}-\frac{n}{2}} \rho ^{\frac{n-1}{2}} r^{-n-2} e^{\mathrm{i}n \theta  } }{\left(\tau ^2+1\right) (\lambda +3 \mu )  (n-2)!}\Big\{\lambda  \left(-n r^2+n+\tau ^2+1\right)\notag\\
					&+\mu  \left(-n r^2+n+2 r^2+3 \tau ^2+1\right)\Big\}\omega^{n-1}\left(1+\mathcal{O}\left(\omega^2\right)\right),\notag\\
					A_{41}=&0.\notag
				\end{align}
				Based on \eqref{eq:graus} and \eqref{eq:graxs}, we can obtain
				\begin{align}\label{eq:graufss}
					&\|\nabla\bu^s\|_{L^{2}(\mathcal{N}_{D,\gamma_{2}-1}^{+})^2 }^{2}\\ 
					=&\left(\mathcal{B}_{4,n}\left(1-\frac{1}{\gamma _{2}^{2n+2}}\right)+\mathcal{B}_{5,n}\left(1-\frac{1}{\gamma _{2}^{2n}}\right)+\mathcal{B}_{6,n}\left(1-\frac{1}{\gamma _{2}^{2n-2}}\right)\right)\omega ^{2n-2}\left(1+\mathcal{O}(\omega^2  )\right)	\notag\\
					=&\mathcal{B}\omega ^{2n-2}\left(1+\mathcal{O}(\omega^2  )\right),\notag
				\end{align}
				where
				\begin{align}
					\mathcal{B}_{4,n}&=\frac{3\pi \kappa^2 2^{3-2 n} \left(\tau ^2-1\right)^2 \mu ^{1-n} \rho ^{n-1}\left(\tau ^2 (\lambda +3 \mu )+(n+1) (\lambda +\mu )\right)^2 }{\left(\tau ^2+1\right)^2 (\lambda +3 \mu )^2 (n+1) (n-2)!^2},\notag \\
					\mathcal{B}_{5,n}&=-\frac{6 \pi\kappa^2 2^{3-2 n} \left(\tau ^2-1\right)^2  \rho ^{n-1} n (\lambda +\mu ) \left(\tau ^2 (\lambda +3 \mu )+(n+1) (\lambda +\mu )\right) }{\mu ^{n-1}\left(\tau ^2+1\right)^2 (\lambda +3 \mu )^2  n(n-2)!^2},\notag\\
					\mathcal{B}_{6,n}&=\frac{ \pi\kappa^2 2^{3-2 n} \left(\tau ^2-1\right)^2 \rho ^{n-1}\left(4 \mu ^2+3 n^2 (\lambda +\mu )^2\right) }{\mu ^{n-1} \left(\tau ^2+1\right)^2 (\lambda +3 \mu )^2  (n-1)!(n-2)!},\notag\\
					\mathcal{B}&=\left(\mathcal{B}_{4,n}\left(1-\frac{1}{\gamma _{2}^{2n+2}}\right)+\mathcal{B}_{5,n}\left(1-\frac{1}{\gamma _{2}^{2n}}\right)+\mathcal{B}_{6,n}\left(1-\frac{1}{\gamma _{2}^{2n-2}}\right)\right).\notag   
				\end{align}
				Since $\lambda$, $\mu$, and $\rho$ are all of order $\mathcal{O}(1)$, rigorous calculations yield
				\begin{align}\notag
					\mathcal{B}_{4,n}+\mathcal{B}_{5,n}+\mathcal{B}_{6,n}=\mathcal{O}\left(\frac{  \kappa^2 2^{3-2 n}  \mu ^{1-n} \rho ^{n-1}  }{  (n-1)! (n-2)!}\right).\notag
				\end{align}
				
				When $n$ is sufficiently large, combining \eqref{eq:graufss} and \eqref{eq:graufsiu}, we obtain
				\begin{align}
					\frac{\|\nabla\bu^s\|_{L^2(\mathcal{N}_{D,\gamma_{2}-1}^{+})^2}^2}{\|\bu^i\|_{L^2(D)^2}^2}=&\frac{\mathcal{B}\omega ^{2n-2}(1+\mathcal{O}(\omega^2  ))}{\frac{\pi\kappa ^2 2^{3-2 n} \mu ^{1-n} \rho ^{n-1} }{n!(n-1)!} \omega^{2n-2}(1+\mathcal{O}(\omega^2  ))} =\mathcal{O}\left(n^2\right).\notag
				\end{align}
				
				Theorem~\ref{thm:isl} establishes that when $\varepsilon \ll 1$ and $n \geq \max(n_1, n_2, n_3, n_4)$, the index $n$ is sufficiently large. Furthermore, under assumptions \eqref{eq:hicon}-\eqref{eq:de} and \eqref{eq:lambda=}, the  parameter $\delta$ satisfies $\delta \ll 1$. Consequently, for  $n \geq \delta^{-2}$, we obtain the sufficient largeness condition for $n$. Thus, one has
				\begin{align}\label{eq:surface2} 
					\frac{\|\nabla\bu^s\|_{L^2(\mathcal{N}_{D,\gamma_{2}-1}^{+})^2}}{\|\bu^i\|_{L^2(D)^2}}=\mathcal{O}\left({n}\right)\gg1.  
				\end{align}   
				From \eqref{eq:surface2}, we conclude that the scattered field \(\mathbf{u}^s\) exhibits surface resonance.  Theorem \ref{thm:isl} establishes that when the parameters satisfy \(\varepsilon \ll 1\), \(n \geq \max\left(n_1, n_2, n_3, n_4\right)\), the scattered field \(\mathbf{u}^s\) exhibits boundary localization. Consequently, these results collectively demonstrate that the scattered field \(\mathbf{u}^s\) exhibits quasi-Minnaert resonance.
				
				The proof is complete.
			\end{proof}
			
			 Theorem \ref{thm:sre} establishes that selecting the incident wave index $n$ from \eqref{eq:ui} with \( n \geq \max(n_1, \\n_2, n_3, n_4, \delta^{-2}) \) guarantees the simultaneous occurrence of boundary localization and surface resonance, thereby inducing quasi-Minnaert resonance. Building upon \eqref{eq:surface1} and \eqref{eq:surface2}, Proposition \ref{prop1} demonstrates that by selecting the incident wave index $n$ satisfying $n \geq \delta^{-2}$, surface resonance can occur independently.
			
			\begin{prop}\label{prop1}
				Consider the elastic scattering problem \eqref{eq:xtm}, under the same assumptions as  Theorem \ref{thm:sre}, if $n \geq 1/\delta^2$, then  $\mathbf{u}|_{D}$ and $\mathbf{u}^s|_{\mathbb{R}^2 \setminus \overline{{D}}}$ are surface resonance.
			\end{prop}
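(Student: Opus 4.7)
The plan is to extract surface resonance directly from the two asymptotic identities already established inside the proof of Theorem \ref{thm:sre}, and to observe that these identities do not require the stronger threshold $n\geq\max(n_1,n_2,n_3,n_4)$ needed for boundary localization; only the regime $n\geq 1/\delta^2$ is needed to push the relevant ratios to infinity.

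First I would recall the two key asymptotic estimates derived in the proof of Theorem \ref{thm:sre}: combining \eqref{eq:graufs} with \eqref{eq:graufsiu} yields
\begin{align}\notag
\frac{\|\nabla\mathbf{u}\|_{L^2(\mathcal{N}_{D,1-\gamma_{1}}^{-})^2}^2}{\|{\bu}^i\|_{L^2(D)^2}^2}=\mathcal{O}\left(n^2\delta^2\right),
\end{align}
while combining \eqref{eq:graufss} with \eqref{eq:graufsiu} gives
\begin{align}\notag
\frac{\|\nabla\bu^s\|_{L^2(\mathcal{N}_{D,\gamma_{2}-1}^{+})^2}^2}{\|\bu^i\|_{L^2(D)^2}^2}=\mathcal{O}\left(n^2\right).
\end{align}
I would emphasize that the derivation of these two identities depends only on the asymptotic expansions of $\bvarphi_1,\bvarphi_2$ in \eqref{eq:x12} and \eqref{eq:phi212}, the single-layer representations from Lemma \ref{lem:sinlay}, and the computation of the gradient tensors $\nabla\bu$, $\nabla\bu^s$ and of $\|\bu^i\|_{L^2(D)^2}$; none of these steps invoke the boundary localization thresholds $n_1,n_2,n_3,n_4$.

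Next I would take square roots to obtain
\begin{align}\notag
\frac{\|\nabla\mathbf{u}\|_{L^2(\mathcal{N}_{D,1-\gamma_{1}}^{-})^2}}{\|{\bu}^i\|_{L^2(D)^2}}=\mathcal{O}(n\delta),\qquad \frac{\|\nabla\bu^s\|_{L^2(\mathcal{N}_{D,\gamma_{2}-1}^{+})^2}}{\|\bu^i\|_{L^2(D)^2}}=\mathcal{O}(n).
\end{align}
Under the high-contrast assumption \eqref{eq:hicon} we have $\delta\ll 1$, so the hypothesis $n\geq 1/\delta^2$ yields $n\delta\geq 1/\delta \gg 1$ and $n\geq 1/\delta^2\gg 1$. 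Consequently both ratios above tend to infinity, and the surface resonance conditions \eqref{eq:def2.2} in Definition \ref{def:surface resonant} are satisfied for both $\bu|_D$ and $\bu^s|_{\mathbb{R}^2\setminus\overline{D}}$, which completes the argument.

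There is no genuine technical obstacle here, since the heavy asymptotic machinery has already been set up in the proof of Theorem \ref{thm:sre}; the only subtlety worth underlining is conceptual, namely that the thresholds $n_1,n_2,n_3,n_4$ appearing in Theorem \ref{thm:isl} arose from controlling the ratios $\gamma_1^n$ and $\gamma_2^{-n}$ used to localize the $L^2$ mass near $\partial D$, whereas the surface resonance alone is governed only by the growth of $n$ (and of $n\delta$) relative to the normalization $\|\bu^i\|_{L^2(D)^2}$. Making this separation explicit is the main point of the proof and justifies why the milder assumption $n\geq 1/\delta^2$ suffices in Proposition \ref{prop1}.
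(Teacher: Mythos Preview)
Your proposal is correct and follows essentially the same approach as the paper: the paper's own proof simply quotes the two asymptotic identities $\mathcal{O}(n\delta)$ and $\mathcal{O}(n)$ obtained inside the proof of Theorem \ref{thm:sre}, then observes that $\delta\ll 1$ together with $n\geq \delta^{-2}$ forces both quantities to blow up, yielding surface resonance. Your additional remark that the thresholds $n_1,n_2,n_3,n_4$ play no role in deriving these gradient estimates is a helpful clarification, but the underlying argument is identical to the paper's.
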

			\begin{proof}
				From the proof of Theorem \ref{thm:sre}, we can obtain 
				\begin{align}\notag  
					\frac{\|\nabla\mathbf{u}\|_{L^2(\mathcal{N}_{D,1-\gamma_{1}}^{-})^2}}{\|{\bu}^i\|_{L^2(D)^2}}=\mathcal{O}\left(n\delta\right), \quad\frac{\|\nabla\bu^s\|_{L^2(\mathcal{N}_{D,\gamma_{2}-1}^{+})^2}}{\|\bu^i\|_{L^2(D)^2}}=\mathcal{O}\left({n}\right).  
				\end{align}
				Under assumptions \eqref{eq:hicon}-\eqref{eq:de} and \eqref{eq:lambda=}, the  parameter $\delta$ satisfies $\delta \ll 1$. Consequently, for  $n \geq\delta^{-2}$, we can obtain $\mathcal{O}\left({n}\right)\gg1$ and $\mathcal{O}\left(n\delta\right)\gg1$.
				It follows directly that $\mathbf{u}|_{D}$ and $\mathbf{u}^s|_{\mathbb{R}^2 \setminus \overline{{D}}}$ are surface resonant, where \(\delta\) represents the high contrast ratio between the Lam\'e parameters of the hard material \(D\) and the soft elastic background \(\mathbb{R}^2 \setminus \overline{D}\). Both the internal wave field \( \bu|_D \) and the exterior scattered wave field \( \bu^s|_{\mathbb{R}^2 \setminus \overline{D}} \) exhibit high oscillations.
			\end{proof}
			
			In  Theorem \ref{thm:sre} and  Proposition \ref{prop1}, under the sub-wavelength regime and for given high contrast material structure parameters, by appropriately selecting the parameter \(n\) defining the incident wave \(\bu^i\) in \eqref{eq:ui}, we demonstrate the occurrence of quasi-Minnaert resonance and surface resonance. Conversely, the occurrence of quasi-Minnaert resonance and surface resonance can inform the design of high contrast material parameters.
			\begin{prop}\label{prop:4.2}
				Consider the elastic scattering problem \eqref{eq:xtm}, under the assumptions of Theorem \ref{thm:sre}	and for boundary localization level \( \varepsilon\ll1 \), the parameter \(\delta\) can be chosen to satisfy 
				\[\delta\leq\delta_{1}=\min\left(\left(\frac{\ln\gamma_{1}}{\ln\varepsilon}\right)^{1/2},\left(-\frac{\ln\gamma_{2}}{\ln\varepsilon}\right)^{1/2}\right),\]
				to design a material structure with desired properties. If the index \(n\) associated with \(\bu^i\) defined in \eqref{eq:ui} satisfies
				$ n\geq\max\left({n_{2},n_{4},1/\delta^2}\right),$
				the designed material structure exhibits quasi-Minnaert resonance.
			\end{prop}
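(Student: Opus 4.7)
My plan is to recognize Proposition \ref{prop:4.2} as a design-parameter reformulation of Theorem \ref{thm:sre}: once the threshold $\delta_{1}$ and the stated lower bound on the index $n$ are fixed, all the hypotheses of Theorem \ref{thm:sre} hold automatically, so the quasi-Minnaert resonance conclusion follows without any further asymptotic analysis. In this sense the proof is a purely order-theoretic reduction to results already established in Sections \ref{sec:alg} and \ref{sec:experiments}, with no new estimates on $\bu$ or $\bu^{s}$ required.

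First I would square the hypothesis $\delta\leq\delta_{1}$ to obtain
\[
\delta^{2}\leq\delta_{1}^{2}=\min\left(\frac{\ln\gamma_{1}}{\ln\varepsilon},-\frac{\ln\gamma_{2}}{\ln\varepsilon}\right)=\delta_{0},
\]
where $\delta_{0}$ is the threshold introduced in \eqref{eq:327}. Hence $1/\delta^{2}\geq 1/\delta_{0}$, and by repeating the elementary comparison that produced \eqref{eq:326} in the proof of Proposition \ref{cor:isl} one checks that $1/\delta_{0}\geq\max(n_{1},n_{3})$, where $n_{1}$ and $n_{3}$ are the boundary-localization indices from \eqref{eq:n13}. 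Combined with the assumed bound $n\geq\max(n_{2},n_{4},1/\delta^{2})$, this immediately yields
\[
n\geq\max(n_{1},n_{2},n_{3},n_{4},1/\delta^{2}),
\]
which is precisely the hypothesis of Theorem \ref{thm:sre}.

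I would then invoke Theorem \ref{thm:sre} to conclude that both surface-resonance estimates in \eqref{eq:grath1} hold for the chosen material structure, so that $\bu$ and $\bu^{s}$ simultaneously exhibit boundary localization (Theorem \ref{thm:isl}) and surface resonance. By Definition \ref{def:quasi minnaert}, the configuration is then a quasi-Minnaert resonator, which is the desired conclusion.

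There is no substantive obstacle, since no new asymptotics need to be derived; the only conceptual point worth stressing is the square-root scaling $\delta_{1}=\delta_{0}^{1/2}$. This scaling reflects that the surface-resonance criterion of Proposition \ref{prop1} requires $n\gtrsim 1/\delta^{2}$, whereas boundary localization in Proposition \ref{cor:isl} needs only $n\gtrsim 1/\delta$. Squaring aligns these two thresholds cleanly and shows that Proposition \ref{prop:4.2} is a strict strengthening of Proposition \ref{cor:isl} in which the design parameter $\delta$ is tuned to produce the full quasi-Minnaert effect, rather than merely boundary localization.
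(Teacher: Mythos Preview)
Your proposal is correct and follows essentially the same route as the paper, which simply states that the proof follows by combining the proofs of Theorems \ref{thm:isl} and \ref{thm:sre} and omits the details. In fact you have spelled out more than the paper does: the squaring step $\delta^{2}\leq\delta_{1}^{2}=\delta_{0}$ linking the threshold here to the one in Proposition \ref{cor:isl}, and the remark on why the square-root scaling $\delta_{1}=\delta_{0}^{1/2}$ arises from the $n\gtrsim 1/\delta^{2}$ surface-resonance requirement versus the $n\gtrsim 1/\delta$ boundary-localization requirement, are both clarifying additions not made explicit in the paper.
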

			\begin{proof}
				By combining the proofs of  Theorems \ref{thm:isl} and \ref{thm:sre}, this proof can be completed. Thus, we omit the details here.
			\end{proof}
			
			In  Theorem \ref{thm:eu}, we prove that the internal total field $\mathbf {u}|_D$ is stress concentrated in $\mathcal{N}_{D,1-\gamma_{1}}^{-}$, and the external scattered field $\mathbf u^s|_{\mathbb R^2\setminus\overline D} $ is stress concentrated in $\mathcal{N}_{D,\gamma_{2}-1}^{+} $.
			\begin{thm}\label{thm:eu}
				Consider the elastic scattering problem \eqref{eq:xtm}. Let $(D;\tilde{\lambda},\tilde{\mu},\tilde{\rho})$ be the unit disk embedded in a homogeneous elastic medium $(\mathbb{R}^2\setminus\overline{D};\lambda,\mu,\rho)$ in $\mathbb{R}^2$. We introduce $E(\bu)$ and $E(\bu^s)$ 
				\begin{align}\label{eq:eueus}
					E(\mathbf{u})=\int_{\mathcal{N}_{D,1-\gamma_{1}}^{-}}\sigma(\mathbf{u}):\nabla\overline{\mathbf{u}}\mathrm{d}\bx,\quad E(\mathbf{u}^s)=\int_{\mathcal{N}_{D,\gamma_{2}-1}^{+}}\sigma(\mathbf{u}^s):\nabla\overline{\mathbf{u}^s}\mathrm{d}\bx,
				\end{align}
				where
				\[\sigma(\mathbf{u})=\tilde{\lambda}(\nabla\cdot\mathbf{u})\mathcal{I}+\tilde{\mu}\left(\nabla\mathbf{u}+\nabla\mathbf{u}^\top\right),\quad\sigma(\mathbf{u}^s)=\lambda(\nabla\cdot\mathbf{u}^s)\mathcal{I}+\mu\left(\nabla\mathbf{u}^s+(\nabla\mathbf{u}^s)^\top\right).\]
				Here, \(\tilde{\lambda}\), \(\tilde{\mu}\), $\lambda$ and $\mu$ are defined in \eqref{eq:hicon}, and \(\mathcal{I}\) denotes the identity matrix. Recall that $\mathcal{N}_{D,1-\gamma_{1}}^{-}$ and $\mathcal{N}_{D,\gamma_{2}-1}^{+}$ are defined in  \eqref{eq:Mdef}. The operator “:” denotes the Hadamard product of two matrices. Under the assumptions \eqref{eq:hicon}-\eqref{eq:de}, \eqref{eq:lambda=} and the parameter \(\varepsilon \ll 1\), if the incident wave $\bu^i$ is chosen as defined in \eqref{lem:qni}, and  the incident wave index $n$ satisfies $n \geq  1/\delta$,
				the following condition holds:
				\begin{align}\label{eq:eug}
					\frac{E(\bu)}{\|{\bu}^i\|_{L^2(D)^2}^2} = \mathcal{O}\left(n^2\delta\right) \gg 1,  \quad
					\frac{E(\mathbf{u^{s}})}{\|\bu^i\|_{L^2(D)^2}^2} = \mathcal{O}\left(n^2\right) \gg 1.
				\end{align}
				Consequently, the corresponding total field $\mathbf{u}|_D$ and the scattered field $\mathbf u^s|_{\mathbb R^2\setminus\overline D}$ exhibit significant stress concentration.
			\end{thm}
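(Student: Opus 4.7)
The plan is to reduce the stress integrals $E(\bu)$ and $E(\bu^{s})$ to the gradient $L^{2}$-norm estimates already established in Theorem~\ref{thm:sre}, via the elementary symmetric identity
\[
\sigma(\bu):\nabla\overline{\bu}=\td{\lambda}\,|\nabla\cdot\bu|^{2}+2\td{\mu}\,|\nabla^{s}\bu|^{2},
\]
and its analogue for $\bu^{s}$ with $(\lambda,\mu)$ in place of $(\td{\lambda},\td{\mu})$. This identity holds because $\sigma$ is symmetric, so $\sigma:\nabla\overline{\bu}=\sigma:\nabla^{s}\overline{\bu}$, and the isotropic Lam\'e part contracts with $\nabla^{s}\overline{\bu}$ to yield only the trace squared. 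Since both quadratic forms on the right are nonnegative, no cancellation can occur at leading order.

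First, I would substitute the explicit polar decomposition \eqref{eq:grau}-\eqref{eq:graa}: since $A_{21}=0$, one has $\nabla\cdot\bu=A_{11}+A_{22}$ and $|\nabla^{s}\bu|^{2}=|A_{11}|^{2}+|A_{22}|^{2}+\tfrac{1}{2}|A_{12}|^{2}$. Each summand becomes a finite sum of monomials in $r^{n-2},r^{n},r^{n+2}$ whose radial integration over $\mathcal{N}_{D,1-\gamma_{1}}^{-}=\{\gamma_{1}<r<1\}$ reproduces exactly the $(1-\gamma_{1}^{2n-2}),(1-\gamma_{1}^{2n}),(1-\gamma_{1}^{2n+2})$ structure encountered in \eqref{eq:graufs}. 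Combining the pointwise bound $\td{\lambda}|\nabla\cdot\bu|^{2}+2\td{\mu}|\nabla^{s}\bu|^{2}\le C(\td{\lambda}+\td{\mu})\|\nabla\bu\|_{F}^{2}$ with the high-contrast relation $\td{\lambda},\td{\mu}=\mathcal{O}(\delta^{-1})$ from \eqref{eq:hicon}, I obtain
\[
E(\bu)=\mathcal{O}(\delta^{-1})\,\|\nabla\bu\|_{L^{2}(\mathcal{N}_{D,1-\gamma_{1}}^{-})^{2}}^{2}.
\]

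Second, invoking the first identity in \eqref{eq:grath1} from Theorem~\ref{thm:sre}, namely $\|\nabla\bu\|_{L^{2}(\mathcal{N}_{D,1-\gamma_{1}}^{-})^{2}}^{2}=\mathcal{O}(n^{2}\delta^{2})\,\|\bu^{i}\|_{L^{2}(D)^{2}}^{2}$, gives $E(\bu)/\|\bu^{i}\|_{L^{2}(D)^{2}}^{2}=\mathcal{O}(\delta^{-1}\cdot n^{2}\delta^{2})=\mathcal{O}(n^{2}\delta)$. Under the hypothesis $n\ge 1/\delta$ with $\delta\ll 1$, one has $n^{2}\delta\ge n\ge 1/\delta\gg 1$, which confirms the first half of \eqref{eq:eug}. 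The argument for $\bu^{s}$ is strictly parallel: using the analogous identity with $(\lambda,\mu)=\mathcal{O}(1)$, the explicit polar gradient \eqref{eq:graus}-\eqref{eq:graxs}, and the second bound in \eqref{eq:grath1}, one gets $E(\bu^{s})/\|\bu^{i}\|_{L^{2}(D)^{2}}^{2}=\mathcal{O}(n^{2})$; the same hypothesis $n\ge 1/\delta$ then forces $n^{2}\gg 1$.

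The main obstacle is verifying that the $\mathcal{O}$-estimate is sharp rather than merely an upper bound, which is what the notation in \eqref{eq:eug} demands. Because both quadratic forms $|\nabla\cdot\bu|^{2}$ and $|\nabla^{s}\bu|^{2}$ are nonnegative and $\|\nabla\bu\|_{L^{2}}^{2}$ already has a nonzero leading coefficient $\mathcal{A}$ (as seen in \eqref{eq:graufs}), sharpness follows automatically from the nonnegativity. The remaining task is the bookkeeping of factorial normalizations: one must match the $1/[(n-1)!(n-2)!]$ prefactor inherited from \eqref{eq:graa} with the $1/[n!(n-1)!]$ prefactor of $\|\bu^{i}\|_{L^{2}(D)^{2}}^{2}$ in \eqref{eq:graufsiu}, producing the $n(n-1)\sim n^{2}$ growth claimed. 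No new analytic input is required beyond what has been developed in Theorem~\ref{thm:sre}.
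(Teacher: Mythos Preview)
Your approach is correct and is in fact somewhat cleaner than the paper's. The paper proceeds by expanding $\sigma(\bu):\nabla\overline{\bu}$ directly into the $\tilde{\lambda}$-part and the $\tilde{\mu}$-part, substitutes the explicit polar expressions \eqref{eq:graa} and \eqref{eq:ure}, and re-integrates over the annulus to obtain fresh coefficients $\mathcal{H}_{i,n}$ (respectively $\mathcal{G}_{i,n}$), whose sum it then compares to \eqref{eq:graufsiu}. You instead recognise that the pointwise two-sided estimate $\tilde{\mu}\,\|\nabla\bu\|_{F}^{2}\le \tilde{\lambda}\,|\nabla\cdot\bu|^{2}+2\tilde{\mu}\,|\nabla^{s}\bu|^{2}\le C(\tilde{\lambda}+\tilde{\mu})\,\|\nabla\bu\|_{F}^{2}$ (using $A_{21}=0$ and your formula $|\nabla^{s}\bu|^{2}=|A_{11}|^{2}+|A_{22}|^{2}+\tfrac{1}{2}|A_{12}|^{2}\ge\tfrac{1}{2}\|\nabla\bu\|_{F}^{2}$) reduces the problem immediately to the gradient estimate \eqref{eq:graufs} already obtained in Theorem~\ref{thm:sre}, so no new integration is required. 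Both routes yield the same $\mathcal{O}(n^{2}\delta)$ and $\mathcal{O}(n^{2})$; yours avoids the repeated bookkeeping, while the paper's produces explicit intermediate coefficients that are not used elsewhere.

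One small point to tighten: your phrase ``sharpness follows automatically from the nonnegativity'' is not quite self-contained. Nonnegativity alone only prevents cancellation between the two quadratic forms; the actual lower bound comes from the inequality $|\nabla^{s}\bu|^{2}\ge\tfrac{1}{2}\|\nabla\bu\|_{F}^{2}$ that you stated earlier (valid here because $A_{21}=0$). Make that link explicit and the argument is complete. Also note that you are invoking the intermediate asymptotic $\|\nabla\bu\|_{L^{2}}^{2}/\|\bu^{i}\|_{L^{2}}^{2}=\mathcal{O}(n^{2}\delta^{2})$ from the \emph{proof} of Theorem~\ref{thm:sre}, which holds for any sufficiently large $n$, rather than the full theorem statement (which carries the stronger hypothesis $n\ge 1/\delta^{2}$); this is fine since $n\ge 1/\delta\gg 1$ already guarantees $n$ large.
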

			\begin{proof}
				The proof of this theorem shall be divided into two parts.
				First, we prove the first equality in \eqref{eq:eug}.
				We select the appropriate incident wave defined in \eqref{eq:ui}, and then we can obtain the asymptotic expansion of $\nabla \bu$ for the interior total field $\bu$ of the scattering problem \eqref{eq:xtm}, as shown in \eqref{eq:grau}. When $n$ is sufficiently large, one has 
				\begin{align}\label{eq:graugrau}
					(\nabla \overline{\bu} )\nabla \bu
					=&(\overline{A_{11} }A_{11}+\overline{A_{12} }A_{21}) (\hat{r} \otimes \hat{r })+(\overline{A_{11} }A_{12}+\overline{A_{12} }A_{22} ) (\hat{r} \otimes \hat{\theta })\\
					&+(\overline{A_{21} }A_{11} +\overline{A_{22} }A_{21} )(\hat{\theta} \otimes \hat{r })+(\overline{A_{21} }A_{12} +\overline{A_{22} }A_{22})(\hat{\theta} \otimes \hat{\theta}),\notag
				\end{align}
				where $A_{11}, A_{12}, A_{21} $ and $A_{22}$ are defined in \eqref{eq:graa}. and based on \eqref{eq:graa}, one can obtain 
				\begin{align}\label{eq:grabar}
					&{\sf Tr}((\nabla \overline{\bu} )\nabla \bu)=(\overline{A_{11} }A_{11}+\overline{A_{12} }A_{21})+(\overline{A_{21} }A_{12} +\overline{A_{22} }A_{22})=|A_{11}|^{2}+|A_{22}|^{2}. 
				\end{align}
				Therefore, by substituting \eqref{eq:ure}, \eqref{eq:graa} and \eqref{eq:grabar} into \eqref{eq:eueus}, one can obtain 
				\begin{align}\label{eq:euz}
					E(\mathbf{u})
					=&-\frac{\lambda }{\delta } \int_{\mathcal{N}_{D,1-\gamma_{1}}^{-}}J_n(\tilde{k}_{p} r)\tilde{k}_{p} e^{\mathrm{i}n\theta }\left(a_{13}+a_{14} \right)\left(\overline{A}_{11}+ \overline{A}_{22}\right)r\mathrm{d}r\mathrm{d}\theta\notag\\
					&+\frac{\mu }{\delta } \int_{\mathcal{N}_{D,1-\gamma_{1}}^{-}}\left({\sf Tr}(\nabla\mathbf{u}^{H} \nabla\mathbf{u})+{\sf Tr}(\nabla\overline{\mathbf{u}}\nabla{\mathbf{u}})\right)r\mathrm{d}r\mathrm{d}\theta\notag\\
					=&\left(\mathcal{H}_{1,n}\left(1-\gamma_1^{2n-2}\right)+\mathcal{H}_{2,n}\left(1-\gamma_1^{2n}\right)+\mathcal{H}_{3,n}\left(1-\gamma_1^{2n+2}\right)\right)\omega ^{2n-2}(1+\mathcal{O}(\omega ^2 ))\notag\\
					=&\mathcal{H}\omega ^{2n-2}(1+\mathcal{O}(\omega ^2 )),
				\end{align}
				where\[\mathcal{H}_{1,n}+\mathcal{H}_{2,n}+\mathcal{H}_{3,n}=\mathcal{O}\left(\frac{\kappa^2 2^{3-2 n} \tau ^4 \epsilon ^2 \mu ^{2-n} \rho ^{n-1}}{\delta  (n-1)!  (n-2)!}\right).\]
				
				By combining \eqref{eq:euz} and \eqref{eq:graufsiu}, when $n$ is sufficiently large, one can obtain 
				\begin{align}
					\frac{E(\bu)}{\|{\bu}^i\|_{L^2(D)^2}^2}=&\frac{\mathcal{H}\omega^{2n-2}\left(1+\mathcal{O}\left(\omega^2\right)\right)}{ \frac{\pi\kappa ^2 2^{3-2 n} \mu ^{1-n} \rho ^{n-1} }{n!(n-1)!}(1-\gamma _{1}^{2n})\omega^{2n-2}(1+\mathcal{O}(\omega ^2 ))}=\mathcal{O}\left(n^2\delta\right).\notag
				\end{align}
				Under assumptions \eqref{eq:hicon}-\eqref{eq:de} and \eqref{eq:lambda=}, the  parameter $\delta$ satisfies $\delta\ll1$. Thus, for $n \geq \delta^{-1}$, we can obtain
				\begin{align}\label{eq:eu1}  
					\frac{E(\bu)}{\|{\bu}^i\|_{L^2(D)^2}^2} = \mathcal{O}\left(n^2\delta\right) \gg 1.  
				\end{align}  
				
				Next, we establish the second equality in \eqref{eq:eug}.
				By applying a similar line of argument as for \eqref{eq:graugrau}, and based on \eqref{eq:graus}, it becomes feasible to obtain the following result when $n$ is sufficiently large,
				\begin{align}\label{eq:grausgraus}
					(\nabla \overline{\bu^{s}} )\nabla \bu^{s}
					=&(\overline{A_{31} }A_{31}+\overline{A_{32} }A_{41}) (\hat{r} \otimes \hat{r })+(\overline{A_{31} }A_{32}+\overline{A_{32} }A_{42} ) (\hat{r} \otimes \hat{\theta })\\
					&+(\overline{A_{41} }A_{31} +\overline{A_{42} }A_{41} )(\hat{\theta} \otimes \hat{r })+(\overline{A_{41} }A_{32} +\overline{A_{42} }A_{42})(\hat{\theta} \otimes \hat{\theta}).\notag
				\end{align}
				Therefore, substituting \eqref{eq:usxs}, \eqref{eq:graus} and \eqref{eq:grausgraus} into \eqref{eq:eueus}, one has 
				\begin{align}\label{eq:euzs}
					&E(\mathbf{u^{s}})\\
					=&-\lambda \int_{\mathcal{N}_{D,\gamma_{2}-1}^{+}}H_n(k_{p} r)k_{p} e^{in\theta }(a_{33}+a_{34} )(\overline{A_{31}}+ \overline{A_{42}})r\mathrm{d}r\mathrm{d}\theta\notag\\
					&+\mu  \int_{\mathcal{N}_{D,\gamma_{2}-1}^{+}}\left({\sf Tr}\left({\left(\nabla\mathbf{u^{s}}\right)^{H} }\nabla\mathbf{u}^{s}\right)+{\sf Tr}\left(\nabla\overline{\mathbf{u}^{s}}\nabla{\mathbf{u}^{s}}\right)\right)r\mathrm{d}r\mathrm{d}\theta\notag\\
					=&\left(\mathcal{G}_{1,n}\left(1-\frac{1}{\gamma_{2}^{2n+2}}\right)+\mathcal{G}_{2,n}\left(1-\frac{1}{\gamma_{2}^{2n}}\right)+\mathcal{G}_{3,n}\left(1-\frac{1}{\gamma_{2}^{2n-2}}\right)\right)\omega^{2n-2}\left(1+\mathcal{O}\left(\omega^2\right)\right)\notag\\
					=&\mathcal{G}\omega^{2n-2}\left(1+\mathcal{O}\left(\omega^2\right)\right),\notag
				\end{align}
				where
				\[\mathcal{G}_{1,n}+\mathcal{G}_{2,n}+\mathcal{G}_{3,n}=\mathcal{O}\left(\frac{  \kappa^2 2^{5-2 n}  \mu ^{2-n} \rho ^{n-1} }{ (n-1)!  (n-2)!}\right).\]
				
				When $n$ is sufficiently large, combining \eqref{eq:euzs} and \eqref{eq:graufsiu} yields
				\begin{align}\label{eq:eu2}
					\frac{E(\mathbf{u^{s}})}{\|\bu^i\|_{L^2(D)^2}^2}=&\frac{\mathcal{G}\omega ^{2n-2}(1+\mathcal{O}(\omega^2  ))}{\frac{\pi\kappa ^2 2^{3-2 n} \mu ^{1-n} \rho ^{n-1} }{n!(n-1)!} \omega^{2n-2}(1+\mathcal{O}(\omega^2  ))} =\mathcal{O}\left(n^2\right).
				\end{align}
				Under assumptions \eqref{eq:hicon}-\eqref{eq:de} and \eqref{eq:lambda=}, the  parameter $\delta$ satisfies $\delta\ll1$. Thus, for $n \geq \delta^{-1}$, we can obtain 
				\begin{displaymath}\notag  
					\frac{E(\mathbf{u^{s}})}{\|\bu^i\|_{L^2(D)^2}^2} = \mathcal{O}\left(n^2\right) \gg 1.  
				\end{displaymath} 
				Thus, we have demonstrated that the second formula in \eqref{eq:eug} holds.
				
				The proof is complete.
			\end{proof}
			\begin{rem}
				Combining \( \gamma_1 \in (0,1) \) with \( \delta \ll 1 \), a comparative analysis of \eqref{eq:grath1} and \eqref{eq:eug} demonstrates that
				\(
				\frac{1}{\delta^2} > \frac{1}{\delta},
				\)
				indicating that stress concentration represents a weaker condition compared to surface resonance. This conclusion shows that surface resonance inevitably leads to stress concentration.
			\end{rem}
			
			Theorem \ref{thm:eu} establishes that stress concentration occurs for the incident wave field $\mathbf{u}^i$ in \eqref{eq:ui} when the wave index satisfies $n \geq \delta^{-1}$. Proposition \ref{prop:4.3} demonstrates that these conditions can be strengthened to produce simultaneous boundary localization and stress concentration phenomena.
			\begin{prop}\label{prop:4.3}
				Under the same assumptions as Theorem~\ref{thm:eu}, when the incident wave index $n \geq \max\left(n_1, n_2, n_3, n_4, 1/\delta\right)$ for the wave field $\mathbf{u}^i$ specified in \eqref{eq:ui}, the  total field $\mathbf{u}$ and scattered field $\mathbf{u}^s$ satisfy the estimates \eqref{eq:def2.1} and \eqref{eq:eug}, respectively. This mathematically confirms the simultaneous occurrence of boundary localization and stress concentration.
			\end{prop}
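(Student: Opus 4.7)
The plan is to observe that Proposition \ref{prop:4.3} is essentially a corollary obtained by combining Theorems \ref{thm:isl} and \ref{thm:eu}, since the hypothesis $n \geq \max(n_1, n_2, n_3, n_4, 1/\delta)$ is precisely the intersection of the hypotheses of those two results. No new asymptotic analysis is required; I would simply verify that the combined lower bound on $n$ simultaneously activates both mechanisms.

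First, I would note that $n \geq \max(n_1, n_2, n_3, n_4)$ is exactly condition \eqref{eq:condition} in Theorem \ref{thm:isl}. Applying that theorem directly to the elastic scattering problem \eqref{eq:xtm} with the incident wave $\bu^i$ given by \eqref{eq:ui} yields
\[
\frac{\|\bu\|_{L^2(D \setminus \mathcal{N}_{D,1-\gamma_1}^{-})^2}^2}{\|\bu\|_{L^2(D)^2}^2} \leq \varepsilon + \mathcal{O}(\varepsilon \omega^2) \ll 1, \qquad \frac{\|\bu^s\|_{L^2((D_R \setminus \overline{D}) \setminus \mathcal{N}_{D,\gamma_2-1}^{+})^2}^2}{\|\bu^s\|_{L^2(\mathbb{R}^2 \setminus D)^2}^2} \leq \varepsilon + \mathcal{O}(\varepsilon \omega^2) \ll 1,
\]
which is precisely the boundary localization estimate \eqref{eq:def2.1} in the sense of Definition \ref{def:surface localized}.

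Next, since the same $n$ also fulfills $n \geq 1/\delta$, the hypothesis of Theorem \ref{thm:eu} is satisfied. Invoking this theorem then produces the stress concentration bounds
\[
\frac{E(\bu)}{\|\bu^i\|_{L^2(D)^2}^2} = \mathcal{O}(n^2 \delta) \gg 1, \qquad \frac{E(\bu^s)}{\|\bu^i\|_{L^2(D)^2}^2} = \mathcal{O}(n^2) \gg 1,
\]
which are exactly the estimates in \eqref{eq:eug}.

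There is no real analytical obstacle here: both conclusions have already been established under their respective hypotheses, and the only verification required is that the imposed lower bound on $n$ dominates each of $n_1, n_2, n_3, n_4$ and $1/\delta$ individually. Consequently, the total field $\bu$ and the scattered field $\bu^s$ simultaneously exhibit boundary localization and stress concentration, completing the proof.
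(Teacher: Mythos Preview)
Your proposal is correct and mirrors the paper's own proof almost exactly: both simply observe that $n \geq \max(n_1,n_2,n_3,n_4)$ triggers Theorem~\ref{thm:isl} (boundary localization) while $n \geq 1/\delta$ triggers Theorem~\ref{thm:eu} (stress concentration), so the combined bound yields both conclusions simultaneously. No additional argument is needed beyond this direct combination.
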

			\begin{proof}
				Theorem \ref{thm:eu} establishes that stress concentration is induced when the incident wave field $\mathbf{u}^i$ in \eqref{eq:ui} has the index $n$ satisfying $n \geq \delta^{-1}$. The proof of Theorem \ref{thm:isl} reveals that the  condition $n \geq \max\left(n_1,n_2,n_3,n_4\right)$ specifically ensures boundary localization. By selecting the index $n$ satisfying the stronger condition $n \geq\max\left(n_1,n_2,n_3,n_4,\delta^{-1}\right)$, we simultaneously achieve both boundary localization and stress concentration phenomena.
			\end{proof}
			
			Theorem \ref{thm:eu} establishes that in the sub-wavelength regime with prescribed high contrast material parameters, proper selection of the incident wave index $n$ in \eqref{eq:ui} induces quasi-Minnaert resonance. Remarkably, these resonant effects conversely provide critical design criteria for optimizing high contrast material configurations, analogous to the framework developed in Proposition \ref{prop:4.2}.
			\begin{prop}
				Under the assumptions of Theorem \ref{thm:eu} with boundary localization level $\varepsilon \ll 1$, the material parameter $\delta$ can be optimally selected through condition \eqref{eq:327} to engineer structures with tailored wave properties. When the incident wave index $n$ in \eqref{eq:ui} satisfies $n \geq \max\left(n_2,n_4,\delta^{-1}\right)$, the designed structure simultaneously manifests both boundary localization and stress concentration phenomena.
			\end{prop}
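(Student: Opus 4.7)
The plan is to reduce this proposition to a direct combination of Proposition \ref{cor:isl} and Theorem \ref{thm:eu}, since the hypothesis on $\delta$ is exactly \eqref{eq:327} and the hypothesis on $n$ strengthens the index condition of Theorem \ref{thm:eu} by the same constants $n_2, n_4$ that appear in Proposition \ref{cor:isl}. No new asymptotic analysis is required; the work is purely verifying that the two sufficient conditions are both satisfied under the proposed assumptions.

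First, I would invoke the inequality \eqref{eq:326} from the proof of Proposition \ref{cor:isl}: once $\delta \leq \delta_0 = \min\bigl(\ln\gamma_1/\ln\varepsilon,\,-\ln\gamma_2/\ln\varepsilon\bigr)$, one has $1/\delta \geq \max(n_1,n_3)$, where $n_1$ and $n_3$ are the quantities defined in \eqref{eq:n13}. Consequently, the hypothesis $n \geq \max(n_2, n_4, 1/\delta)$ automatically upgrades to
\begin{align*}
n \;\geq\; \max(n_1, n_2, n_3, n_4),
\end{align*}
which is precisely the condition \eqref{eq:condition} required in Theorem \ref{thm:isl}. Therefore Theorem \ref{thm:isl} (or equivalently Proposition \ref{cor:isl}) applies and yields the boundary localization estimates \eqref{eq:def2.1} for both the interior total field $\mathbf{u}|_D$ and the exterior scattered field $\mathbf{u}^s|_{\mathbb{R}^2\setminus\overline{D}}$.

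Next, since the same index $n$ trivially satisfies $n \geq 1/\delta$, the hypothesis of Theorem \ref{thm:eu} is met as well. Applying Theorem \ref{thm:eu} directly gives the stress concentration estimates \eqref{eq:eug}, namely $E(\mathbf{u})/\|\mathbf{u}^i\|_{L^2(D)^2}^2 = \mathcal{O}(n^2 \delta) \gg 1$ and $E(\mathbf{u}^s)/\|\mathbf{u}^i\|_{L^2(D)^2}^2 = \mathcal{O}(n^2) \gg 1$ in the respective boundary neighborhoods $\mathcal{N}_{D,1-\gamma_1}^{-}$ and $\mathcal{N}_{D,\gamma_2-1}^{+}$. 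Combining these two conclusions establishes the simultaneous occurrence of boundary localization and stress concentration, completing the proof.

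There is essentially no technical obstacle: the entire argument is a bookkeeping step that follows the template of Proposition \ref{prop:4.2} verbatim, with Theorem \ref{thm:sre} replaced by Theorem \ref{thm:eu}. The only point to verify carefully is that \eqref{eq:327} is indeed the correct threshold to force $1/\delta \geq \max(n_1, n_3)$, but this has already been carried out in the proof of Proposition \ref{cor:isl}, so the proof of the present proposition can reasonably be stated in a sentence and then omitted, as the authors do for Proposition \ref{prop:4.2}.
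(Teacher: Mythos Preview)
Your proposal is correct and takes essentially the same approach as the paper, which simply cites Proposition~\ref{cor:isl} and Proposition~\ref{prop:4.2} as templates and omits the details. You have filled in precisely the bookkeeping the paper leaves implicit: using \eqref{eq:327} to obtain $1/\delta \geq \max(n_1,n_3)$ so that Theorem~\ref{thm:isl} applies, and noting $n \geq 1/\delta$ so that Theorem~\ref{thm:eu} applies.
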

			\begin{proof}
				Following the methodology established in  Proposition \ref{cor:isl} and Proposition \ref{prop:4.2}, the current proof can be completed through analogous arguments. For conciseness, we omit the detailed derivation here.
			\end{proof}
		
\section*{Concluding Remarks} 
\label{sec:conclusions}

In this paper, we have provided a detailed and systematic analysis of quasi-Minnaert resonance. We have demonstrated that the boundary localization and surface resonance depend critically on both the appropriate selection of incident waves and the high contrast in physical parameters between the rigid elastic material and the soft elastic medium. The stress concentration phenomena in both the internal total field and the scattered field  are rigorously established.  By integrating layer potential techniques, refined asymptotic analysis, and carefully constructed incident waveforms, we provide a rigorous framework for establishing quasi-Minnaert resonance in both internal and scattered wave fields.


\bigskip

\noindent\textbf{Acknowledgment.}
The work of H. Diao is supported by the National Natural Science Foundation of China  (No. 12371422) and the Fundamental Research Funds for the Central Universities, JLU.

\medskip




\end{document}